\newcommand{\Png}[2]{
{\includegraphics[height=#1]{#2}}
}
\newcommand{\coev}{\operatorname{coev}}
\newcommand{\ev}{\operatorname{ev}}
\newcommand{\tev}{\widetilde{\operatorname{ev}}}
\newcommand{\tcoev}{\widetilde{\operatorname{coev}}}
\newcommand{\rt}{\mathsf{p}}
\newcommand{\slt}{{\mathfrak{sl}(2)}}
\newcommand{\Urest}{{{U}}}
\newcommand{\repX}{\mathscr{X}}
\newcommand{\Hlog}{\mathrm{H}^{\mathrm{log}}}
\newcommand{\Jun}{\mathrm{J}^{\mathrm{log}}}
\newcommand{\U}{{U}}
\newcommand{\UD}{{D}}
\newcommand{\mTr}{\mathrm{Tr}'}
\renewcommand{\t}{{\mathsf{t}}}
\newcommand{\rTr}{{\mathsf{t}}^R}
\newcommand{\lTr}{{\mathsf{t}}^L}
\newcommand{\tr}{\operatorname{tr}}
\newcommand{\cat}{\mathscr{C}}
\newcommand{\Id}{\operatorname{id}}
\newcommand{\Proj}{\mathcal{P}}
\DeclareMathOperator{\Upmod}{{\it U}-pmod}
\DeclareMathOperator{\Umod}{{\it U}-mod}
\DeclareMathOperator{\Dmod}{{\it D}-mod}
\newcommand{\cAlg}{\mathfrak{C}}
\newcommand{\SL}{\ensuremath{{\mathfrak{sl}}(2)}}
\newcommand{\ideme}{{\textbf{e}}}
\newcommand{\idemw}{{\textbf{w}}}
\newcommand{\one}{\mathbf{1}}
\newcommand{\tclass}{\textbf{h}}
\newcommand{\junyp}{\textbf{y}^+}
\newcommand{\junxp}{\textbf{x}^+}
\newcommand{\junbp}{\textbf{b}^+}
\newcommand{\junap}{\textbf{a}^+}
\newcommand{\junym}{\textbf{y}^-}
\newcommand{\junxm}{\textbf{x}^-}
\newcommand{\junbm}{\textbf{b}^-}
\newcommand{\junam}{\textbf{a}^-}
\newcommand{\junypm}{\textbf{y}^\pm}
\newcommand{\junxpm}{\textbf{x}^\pm}
\newcommand{\junbpm}{\textbf{b}^\pm}
\newcommand{\junapm}{\textbf{a}^\pm}
\newcommand{\qv}{{ r}}
\newcommand{\End}{\operatorname{End}}
\newcommand{\Hom}{\operatorname{Hom}}
\newcommand{\HH}{\operatorname{HH}_0}
\newcommand{\Ob}{\operatorname{\Ob}}
\newcommand{\C}{\ensuremath{\mathbb{C}} }
\newcommand{\Z}{\ensuremath{\mathbb{Z}} }
\newcommand{\Rib}{\ensuremath{\mathrm{Rib}} }
\newcommand{\qCh}{\ensuremath{\mathrm{qChar}} }
\newcommand{\Ch}{\ensuremath{\mathrm{Char}} }
\newcommand{\PP}{\ensuremath{\mathcal{P}} }
\newtheorem{definition}{Definition}
\newtheorem{theorem}[definition]{Theorem}
\newtheorem{proposition}[definition]{Proposition}
\newtheorem{lemma}[definition]{Lemma}
\newcounter{IntroCounter}
\theoremstyle{remark}
\newtheorem*{remark}
{Remark}
\newcounter{exo} \newcounter{numexercice}
\renewcommand{\theexo}{\arabic{exo}}
\newcommand{\epsh}[2]
         {\begin{array}{c} \hspace{-1.3mm}
        \raisebox{-4pt}{\epsfig{figure=#1.pdf,height=#2}}
        \hspace{-1.9mm}\end{array}}
\begin{document}
\title[Logarithmic Hennings invariant ]{Logarithmic Hennings invariants\\[.2cm] for restricted quantum
$\SL$}

\author[A. Beliakova]{Anna Beliakova}
\address[anna@math.uzh.ch]{Anna Beliakova, Institut f\"ur Mathematik,
Universit\"at Z\"urich,
Winterthurerstrasse 190,
CH-8057 Z\"urich.}

\author[C. Blanchet]{Christian Blanchet}
\address[christian.blanchet@imj-prg.fr]{Christian Blanchet, Univ Paris Diderot, Sorbonne Paris Cit\'e, IMJ-PRG, UMR 7586 CNRS,  F-75013, Paris.}

\author[N. Geer]{Nathan Geer}
\address[nathan.geer@gmail.com]{Nathan Geer, Mathematics \& Statistics\\ Utah State University \\ Logan, Utah 84322, USA}

\begin{abstract}
We construct a Hennings type logarithmic invariant
for restricted quantum $\SL$ at a $2\rt$-th root of unity.
This quantum group $\Urest$ is not braided, but factorizable.
The invariant is defined for a pair: a 3-manifold $M$ and
a colored link $L$ inside $M$.  
The link $L$ is split into two parts colored 
 by central elements and  by trace classes, or elements in the $0^{\text{th}}$ Hochschild homology
of $\Urest$, respectively.
The two main ingredients of our construction are
 the universal
invariant of a string link with values in tensor powers of $\Urest$, and
the  {\it modified trace} introduced by the third author with
his collaborators and
computed on tensor powers of  the regular representation.  
 Our invariant is a colored extension  of the  logarithmic invariant constructed by Jun Murakami.

\end{abstract}

\maketitle
\setcounter{tocdepth}{3}
\section{Introduction}
In the 90th M. Hennings came up with a  construction of 
3-manifold invariants out of   a factorizable ribbon 
Hopf algebra $H$ \cite{Hennings}. In his construction 
the right integral $\mu\in H^*$ satisfying
$$ (\mu \otimes {\rm{id}}) \Delta(x)=\mu(x) \one \quad \text{for all}\quad x\in H$$
plays the role of the Kirby color.
If the category of $H$-modules is semi-simple, Hennings recovers
 the Reshetikhin--Turaev invariant.
However in the non semi-simple case,  his invariant vanishes
for manifolds with positive first Betti 
number (see  \cite{Kerler_genealogy}). 
%
 A TQFT based on the Hennings invariant
was constructed by 
Lyubashenko and Kerler \cite{Lyu, Kerler_MCG}. 
It satisfies the full TQFT axioms for 
lagrangian cobordisms between connected surfaces
with one boundary component. In the general case, it has 
 weak functoriality and monoidality properties.
%
%

More recently,  a completely different non-semisimple TQFT
 based on the unrolled quantum $\SL$ was defined by
 Blanchet, Costantino, Geer and Patureau \cite{BCGP}. This construction
 uses  the logarithmic  3-manifold invariant  constructed previously by Costantino, Geer and Patureau (CGP) 
 in \cite{CGP1}
 by generalizing
  the Kashaev invariant. More precisely,
 the CGP invariant is defined for an admissible pair: a 3-manifold 
 together with a $\C/2\Z$ valued cohomology class.
 One of the most innovative ingredients of  the CGP construction is 
 the so-called {\it modified trace} which contrary to  the 
 usual quantum trace does not vanish on the projective modules. 
 
 In this paper we construct a family of invariants of $3$-manifolds
 with colored links inside by combining Hennings 
 approach 
 with the modified trace methods  of \cite{GKP2,GPV}. 
 We develop structural properties of
  restricted quantum $\SL$,
 by working in the Hopf algebra itself rather than in its category
  of modules.  Our invariants generalize the logarithmic invariants of knots in 3-manifolds recently
  introduced by J. Murakami \cite{jM2013}. 
\subsection*{Main results}  
 Let us denote by $\Urest$ the restricted quantum $\SL$ at
$2\rt^{\text{th}}$ root of unity $q=e^{\frac{i\pi}{\rt}}$,
explicitely defined in the next section.
The Hopf algebra $\Urest$ does not contain
 an $R$-matrix, but only monodromy or double braiding.
However, $\Urest$ 
  is a subalgebra of a ribbon
  Hopf algebra $\UD$ obtained by adjoining
 a square root of the generator $K$.
 Since $\UD$ is not factorizable and $\Urest$ is not braided, 
 neither $\Urest$, nor $\UD$ supports the Hennings--Kerler--Lyubashenko
 TQFT construction. 

Let $\Umod$ be the category of finite dimensional $\Urest$-modules.
This is a finite pivotal tensor category. Hence, for any morphism
$f$ in $\Umod$ there is a notion of a categorical (or quantum) left  and right traces, denoted by $\tr_l(f)$ and $\tr_r(f)$, respectively.

 Let $\Upmod$ be its full subcategory of projective $\Urest$-modules. An explicit description of $\Upmod$ is given
  in \cite{FGST2}. 
  Let us denote by $\PP^\pm_j$ with $j=1,...,\rt$ the indecomposable projective modules.  Here $\PP^\pm_\rt$ is a simple module with highest weight $\pm q^{\rt-1}$. The module $\PP^+_1$ is the projective cover of the trivial module.  The space
  of endomorphisms $\End_\Urest(\PP^\pm_j)$ ($1\leq j\leq \rt-1$) is two dimensional
  with basis given by the identity $\Id_{\PP^\pm_j}$ and a 
  nilpotent endomorphism $x^\pm_j$, defined in Section 2.

The subcategory $\Upmod$ is an ideal of $\Umod$ in the sense of \cite{GKP2, GPV}.
A {\em modified trace} on $\Upmod$, is  a family of linear functions
$$\{\t_V:\End_\Urest V\rightarrow \C \}_{V\in \Upmod}$$
such that the following two conditions hold:
\begin{enumerate}
\item {\bf Cyclicity.}  If $X,V\in \Upmod$, then for any morphisms \mbox{$f:V\rightarrow X $} and $g:X\rightarrow V$  in $\Urest$-mod we have 
\begin{equation*}\label{E:Tracefggf}
\t_V(g f)=\t_X(f  g).
\end{equation*} 
\item {\bf Partial trace properties.} If $X\in \Upmod$ and $W\in \Umod$ then for any $f\in \End_\Urest(X\otimes W)$ and $g\in \End_\Urest(W\otimes X)$ we have
\begin{align*}%
\t_{X\otimes W}\left(f \right)
&=\t_X \left( \tr_r^W(f)\right),  \\
 \t_{W\otimes X}\left(g \right)
&=\t_X \left( \tr_l^W(g)\right) , 
\end{align*}
where $\tr_r^W$ and $\tr_l^W$ are the right and left partial categorical traces along $W$ defined using the pivotal structure in Eqs. \eqref{E:PartialLRtrace}.  
\end{enumerate}

Our first main result is the following.

\begin{theorem}\label{mTrace}
There exists a unique  family of linear functions
$$\{\t_V:\End_\Urest(V)\rightarrow \C \}_{V\in \Upmod}\ ,$$
satisfying cyclicity and the partial trace properties,
 normalized by 
$$\t_{\PP^+_\rt}(\Id_{\PP^+_\rt})=(-1)^{\rt-1}\ .$$
Moreover, $\t_{\PP^-_\rt}(\Id_{\PP^-_\rt})=1$ and for $1\leq j\leq \rt-1$ we have
$$\t_{\PP^\pm_j}(\Id_{\PP^\pm_j})=
(\pm 1)^{\rt-1} (-1)^j(q^j +q^{-j})\quad\text{and}\quad
\t_{\PP^\pm_j}(x^\pm_j)=(\pm 1)^{\rt}(-1)^j[j]^2 \, .
$$

\end{theorem}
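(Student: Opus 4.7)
My plan has three parts: reduce existence and uniqueness to a standard result on modified traces in pivotal ideals, pin the global scaling via the stated normalization, and then compute the remaining values by exploiting the partial-trace axiom against explicit tensor-product decompositions.

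For existence and uniqueness, the idea is to invoke the general theory of modified traces on finite pivotal tensor categories developed in \cite{GKP2,GPV}. Two features of $\Umod$ make this theory directly applicable. First, $\PP^+_\rt$ is simple and projective, so $\End_\Urest(\PP^+_\rt)\cong\C$, and any simple projective is automatically ambidextrous; this yields a modified trace on the ideal it generates. Second, from the tensor product rules for $\Urest$ recalled in \cite{FGST2}, every indecomposable projective $\PP^\pm_j$ occurs as a direct summand of $\PP^+_\rt\otimes X$ for some simple $\Urest$-module $X$, so the ideal $\Upmod$ is generated by $\PP^+_\rt$. These two facts together imply that a nonzero modified trace on $\Upmod$ exists and is unique up to a global scalar, and the normalization $\t_{\PP^+_\rt}(\Id_{\PP^+_\rt})=(-1)^{\rt-1}$ pins down this scalar.

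Once existence is secured, the identity values are forced by the partial-trace property. For $\PP^-_\rt$, one fixes an isomorphism $\PP^+_\rt\otimes X\cong\PP^-_\rt$ with $X$ the one-dimensional sign simple module; the second axiom expresses $\t_{\PP^-_\rt}(\Id)$ as $\t_{\PP^+_\rt}(\tr^X_r(\Id))$, and after the quantum dimension of $X$ in the pivotal structure is recorded, this yields $1$. For $1\leq j\leq\rt-1$, I would realize $\PP^\pm_j$ as a summand of $\PP^+_\rt\otimes X'$ for an appropriate simple $X'$ via an explicit idempotent $e$, and then write $\t_{\PP^\pm_j}(\Id)=\t_{\PP^+_\rt}(\tr^{X'}_r(e))$. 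The right-hand side is a scalar multiple of $\Id_{\PP^+_\rt}$, and evaluating that scalar through the standard pivotal trace on the simple projective recovers $(\pm 1)^{\rt-1}(-1)^j(q^j+q^{-j})$.

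The most delicate step is computing $\t_{\PP^\pm_j}(x^\pm_j)$, because $x^\pm_j$ is not a priori the identity or the partial trace of an obvious endomorphism. The plan is to factor $x^\pm_j$ through the tensor-product realization: with $\iota:\PP^\pm_j\hookrightarrow\PP^+_\rt\otimes X'$ and $\pi:\PP^+_\rt\otimes X'\twoheadrightarrow\PP^\pm_j$ the inclusion and projection corresponding to $e$, I would construct an explicit endomorphism $n$ of $\PP^+_\rt\otimes X'$ such that $\pi\circ n\circ\iota=x^\pm_j$, using the fact that $\PP^\pm_j$ has head and socle both isomorphic to the same simple module so that a nonzero nilpotent endomorphism factors as the surjection onto the head followed by a lift of the socle inclusion. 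Cyclicity together with the partial-trace axiom then converts $\t_{\PP^\pm_j}(x^\pm_j)$ into a pivotal trace on $\PP^+_\rt$ that evaluates to $(\pm 1)^\rt(-1)^j[j]^2$. Controlling the lift $n$ and identifying the precise socle-to-head map of $\PP^\pm_j$ is the main technical obstacle and will require the explicit module-theoretic description of $\Urest$ developed elsewhere in the paper.
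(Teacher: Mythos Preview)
Your existence and uniqueness argument glosses over a genuine subtlety. The category $\Umod$ is only pivotal, not ribbon, so the assertion that a simple projective is ``automatically ambidextrous'' requires justification you have not supplied. The paper does not assume this: it invokes \cite[Cor.~3.2.1]{GKP2} twice to obtain separately a unique \emph{right} modified trace $\rTr$ and a unique \emph{left} modified trace $\lTr$, each normalized at $\PP^+_\rt$, and then verifies by explicit computation on every indecomposable projective that $\rTr=\lTr$. Only after this check does one know the trace is two-sided. Your shortcut would need an independent argument that left and right agree.

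Your strategy for the values is also quite different from the paper's, and for the nilpotents it is substantially vaguer. For the identities, the paper does not realize each $\PP^\pm_j$ directly inside $\PP^+_\rt\otimes X'$ via an explicit idempotent; instead it tensors with the two-dimensional simple $\repX^+_2$, computes $\tr_r^{\repX^+_2}(\Id)=-(q+q^{-1})\Id$, and uses the decompositions $\PP^\pm_j\otimes\repX^+_2\cong\PP^\pm_{j-1}\oplus\PP^\pm_{j+1}$ (with the obvious modifications at $j=\rt,\rt-1$) to obtain a two-term recursion that is solved in closed form. For the nilpotents $x^\pm_j$, the paper's key idea is to use the Casimir element: since $C$ is central, left multiplication $l_C$ is a $\Urest$-endomorphism, and on $\PP^\pm_j$ one has $l_C=x^\pm_j\pm\frac{q^j+q^{-j}}{(q-q^{-1})^2}\Id$. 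Computing $\Delta(C)$ explicitly and taking the partial trace $\tr_r^{\repX^+_2}$ of $l_{\Delta(C)}$ on $\PP^\pm_j\otimes\repX^+_2$ then yields another recursion for $\t_{\PP^\pm_j}(x^\pm_j)$. This is concrete and avoids having to build your lift $n$, whose construction you yourself flag as the main technical obstacle and do not carry out.
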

This family is called the modified trace on $\Upmod$.
The proof  uses the fact that $\Umod$ is unimodular (i.e. projective cover of the trivial module is self-dual)
and it has a simple projective object. In this case, 
there exist unique  left and right modified traces on $\Upmod$
by \cite[Cor. 3.2.1]{GKP2}.
We actually compute these traces on the algebra of endomorphisms
of indecomposable projectives and show that they are equal.


Observe that Theorem \ref{mTrace}  applies to
  $\Urest$, considered as a free left module over itself,
  called  the regular representation,
   and its tensor powers. 
 Recall from \cite{FGST2} 
that the regular representation decomposes as
 $$\Urest \cong \bigoplus_{j=1}^{\rt} j \PP_j^+ \oplus 
\bigoplus_{j=1}^{\rt} j \PP_j^- .
$$
The algebra $\End_\Urest(\Urest)$ of the $\Urest$-endomorphisms of
$\Urest$ can be identified with $\Urest^{op}$ (i.e. $\Urest$ with the opposite multiplication). The isomorphism is given by sending an element $x$ of $\Urest^{op}$ to the operator $r_x$ of the right multiplication  by $x$ 
on the regular representation. By definition $r_x$ commutes with the left action.
 More generally, 
  $\cAlg_m=\End_\Urest(\Urest^{\otimes m})$, $m\geq 1$, are 
  known as  centralizer algebras. 

The space of characters (or symmetric functions, see \cite{Arike}) on $\Urest$ is defined as
$$\Ch(\Urest):=\{\phi \in \Urest^* \;|\; \phi ( xy)=\phi(yx) \quad\text{for any}\quad x,y\in \Urest\} .$$ 
This space is dual to 
the  
 $0^{\text{th}}$-Hochschild homology $\HH(\Urest)$, which
 is 
$$ \HH(\Urest):=\frac{U}{[U,U]} \quad\text{where}\quad
[\Urest,\Urest]={\rm{Span}}\{xy-yx\;|\;  x,y\in \Urest\} .$$
 There is an obvious action of the center $Z(\Urest)$ on
 $\Ch(\Urest)$ by setting $z\phi(x):=\phi(zx)$ for any $z\in Z(U)$ and $x\in U$.

 Let us define the linear map
 $$\mTr_m :\cAlg_m\rightarrow \C\quad {\text{by}}\quad
 x\mapsto t_{\Urest^{\otimes m}}(x)$$
 and in particular,  $\mTr: \Urest^{op}\simeq \cAlg_1\rightarrow \C$
is the linear map $\mTr_1=t_\Urest$. 
 Due to cyclicity, we have $\mTr\in \Ch(U)$.  

Our next theorem states properties of the special symmetric function  on $U$ given by the modified trace.

\begin{theorem}\label{main}
The modified trace $\mTr\in \Ch(\Urest)$ satisfies the following properties.
\begin{itemize}
\item (Partial trace property)
For any $f \in \End_{\Urest}(\Urest^{\otimes 2})$,
\begin{equation*}
\mTr_2\left(f \right)
=\mTr\left( \tr_r^\Urest(f)\right) =\mTr \left( \tr_l^\Urest(f)\right)
\end{equation*}
where $ \tr_l^\Urest$ and $\tr_r^\Urest$ are the right and left partial traces, defined in Equation \eqref{E:PartialLRtrace};

\item (Non-degeneracy)
The pairing \mbox{$\langle\ ,\ \rangle : Z(\Urest) \times \HH(\Urest) \to \C$}
defined by $\langle z,u\rangle= \mTr(z u)$
  is non-degenerate.
\end{itemize}
\end{theorem}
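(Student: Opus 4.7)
Part (1) is an immediate specialization of Theorem \ref{mTrace}. The regular representation $\Urest$ is a free left $\Urest$-module, hence projective, and likewise so is $\Urest^{\otimes 2}$; both therefore lie in $\Upmod$. Applying the partial trace identities of Theorem \ref{mTrace} with $X=W=\Urest$ yields the two claimed equalities.

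For Part (2), my strategy is to first establish the non-degeneracy of the full pairing $\Urest\times\Urest\to\C$, $(a,b)\mapsto\mTr(ab)$, and then descend it to $Z(\Urest)\times\HH(\Urest)$. Under the algebra isomorphism $\End_{\Urest}(\Urest)\cong\Urest^{op}$, $x\mapsto r_x$ (right multiplication), one computes $r_x\circ r_y=r_{yx}$, so cyclicity of $\t_\Urest$ gives $\mTr(xy)=\mTr(yx)=\t_\Urest(r_y\circ r_x)$. Hence non-degeneracy of $(a,b)\mapsto\mTr(ab)$ on $\Urest$ is equivalent to the statement that for every $0\neq f\in\End_{\Urest}(\Urest)$ there exists $g$ with $\t_\Urest(f\circ g)\neq 0$. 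This is the main obstacle: it is the general non-degeneracy of the modified trace on the projective ideal of a finite unimodular pivotal tensor category, which I plan to invoke from \cite{GKP2}. As a self-contained fallback, I can verify it by hand from the explicit values computed in Theorem \ref{mTrace}, decomposing $\End_{\Urest}(\Urest)$ into the block algebras dictated by the indecomposable summands of $\Urest\cong\bigoplus_{j,\epsilon}j\,\PP_j^\epsilon$ and using that $\t_{\PP_j^\pm}(\Id_{\PP_j^\pm})$ and, when $j\leq \rt-1$, $\t_{\PP_j^\pm}(x_j^\pm)$ are nonzero.

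Granted this non-degeneracy, the map $\Phi\colon\Urest\to\Urest^*$, $a\mapsto\mTr(a\,\cdot)$, is a linear isomorphism, and I claim it restricts to an isomorphism $Z(\Urest)\xrightarrow{\sim}\Ch(\Urest)$. For $a\in Z(\Urest)$, centrality together with cyclicity of $\mTr$ gives $\Phi(a)(xy)=\mTr(axy)=\mTr(xay)=\mTr(ayx)=\Phi(a)(yx)$, so $\Phi(a)\in\Ch(\Urest)$. Conversely, if $\Phi(a)\in\Ch(\Urest)$, then $\mTr(axy)=\mTr(ayx)=\mTr(xay)$ for all $x,y$, hence $\mTr([a,x]y)=0$ for all $x,y$, which forces $[a,x]=0$ by non-degeneracy, so $a\in Z(\Urest)$. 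Since $\Ch(\Urest)=\HH(\Urest)^*$ canonically, this restricted isomorphism is precisely the non-degenerate pairing $\langle z,u\rangle=\mTr(zu)$ of the statement; once the non-degeneracy of $\mTr$ is in hand, everything else is a formal consequence of cyclicity, centrality, and linear duality.
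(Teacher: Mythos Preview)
Your treatment of the partial trace property coincides with the paper's: both simply specialize the partial trace identities of Theorem \ref{mTrace} to $X=W=\Urest$.

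For non-degeneracy you take a genuinely different route. The paper proceeds by brute force: it fixes the explicit bases $\{\ideme_j,\idemw_j^\pm\}$ of $Z(\Urest)$ and $\{\tclass_k^\pm,\tclass_j\}$ of $\HH(\Urest)$ constructed in Sections \ref{S:ResQuGr} and \ref{S:Universal}, evaluates $\mTr$ on each product using Proposition \ref{P:Prop444} and the numbers from Theorem \ref{mTrace}, and reads non-degeneracy off the resulting block-triangular table. Your argument is more structural: establish non-degeneracy of the full form $(a,b)\mapsto\mTr(ab)$ on $\Urest$, then observe that the induced isomorphism $\Urest\to\Urest^*$ carries $Z(\Urest)$ onto $\Ch(\Urest)=\HH(\Urest)^*$. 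This is cleaner and would apply to any factorizable Hopf algebra once the trace-form non-degeneracy is known; the paper's computation, by contrast, produces the explicit pairing matrix, which is actually reused later to invert Murakami's invariant.

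There is, however, a soft spot in your plan. The non-degeneracy of the modified trace on $\End_\Urest(\Urest)$ is not, as far as I can see, stated in \cite{GKP2}; that paper gives existence and uniqueness of the trace, not non-degeneracy of the associated form. So the citation needs to be replaced or the fallback carried out. And your fallback sketch is incomplete: the block decomposition of $\End_\Urest(\Urest)$ is not governed by the $\End(\PP_j^\pm)$ alone, since $\Hom_\Urest(\PP_j^+,\PP_{\rt-j}^-)$ and $\Hom_\Urest(\PP_j^-,\PP_{\rt-j}^+)$ are two-dimensional (spanned by $a_j^\pm,b_j^\pm$). You must also check that a nonzero $f$ in one of these cross-Hom spaces pairs nontrivially with some $g$ going the other way; this follows from the relations $b_{\rt-j}^{-\epsilon}a_j^\epsilon=x_j^\epsilon$ and $a_j^\epsilon b_{\rt-j}^{-\epsilon}=x_{\rt-j}^{-\epsilon}$ recorded in Lemma \ref{L:HH0} together with $\t_{\PP_j^\pm}(x_j^\pm)\neq 0$, but it should be said explicitly.
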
 

The proof is by direct computation of the pairing in
the basis of the center and $\HH(U)$, defined in Sections 2 and 4, respectively.

For  any    ribbon Hopf algebra
there exists a 
  {\it universal} invariant
associated to  
an oriented  framed tangle $T$. This invariant
is obtained by assigning the $R$-matrix to crossings and 
evaluation and coevaluation maps to maxima and minima (see Section \ref{S:Universal}).
Although our restricted quantum group $\Urest$ is not ribbon, it has a ribbon extension $\UD$, which produces a universal 
invariant $J_T\in\UD^{\otimes m}$ for any tangle $T$  with $m$ components.
If $T$ is  a string link, we argue that $J_T$ actually belongs
to the subspace $(\Urest^{\otimes m})^\Urest\subset \Urest^{\otimes m}$ of invariants under left action. 

 We are now ready to state our main result.
 Let $\mu\in U^*$ be the right integral of $U$. It is unique up to a normalisation which we fix in Section \ref{S:ResQuGr}.
 For $m\geq 1$,  the function $\mTr_m=t_{\Urest^{\otimes m}}$ defines a bilinear pairing 
 $\langle\ , \rangle:(\Urest^{ \otimes m})^{\Urest} \times \Urest^{ \otimes m} \rightarrow \C$  as follows $$\langle z,x\rangle=\mTr_m(l_zr_x)\ .$$
Here $l_z$, $r_x$ are the left and right multiplications, respectively.
 This bilinear pairing factorises on the right through $\HH(\Urest)^{\otimes m}$.
 
Assume that a closed 3-manifold $M$ 
with a  $(m_+,m_-)$ component framed link $(L^+,L^-)$  inside is represented  
by surgery in $S^3$ along the $m_0$ component link $L^0$. 
Let us color the components of $L^+$ (resp. $L^-$) by central  elements $z_j\in Z(U)$, $1\leq j\leq m_+$ (resp. by trace classes $h_k
\in \HH(U)$, $1\leq k\leq m_-$).
Let $T={T^+ \cup T_0\cup T^-}$ be a string link in $S^3$ 
obtained by opening all $m=m_++m_0+m_-$ components. 
Let $s$ be the signature of the linking matrix for $L^0$
and
$\delta:=\frac{1-i}{\sqrt{2}}q^{\frac{3-\rt^2}{2}}$.
We set $z^+=\otimes_j z_j$ (resp. $h^-=\otimes_k h_k$), 
and  denote by $L$  the colored link $((L^+,z^+), (L^-,h^-))$.
Note that to define $z^+$, $h^-$ as well as $J_T$, we need to fix
an order on the components of  $L$, change of this order will result in an obvious permutation of the entries. 
 
 We define
a number 
associated to the pair $(M, L)$
as follows.
\begin{theorem}\label{E:HenningsLog} With the notation as above,
\begin{equation*}\label{Hlog}
\Hlog(M, L):=\delta^s
\langle\left(   z^+\mu^{\otimes m_+}\otimes \mu^{\otimes m_0}\otimes \Id\right)\left(J_T\right),\,h^-\,
\rangle
\end{equation*}
is a topological invariant of the pair
$(M,  L)$.
\end{theorem}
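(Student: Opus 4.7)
The goal is to show that $\Hlog(M,L)$ is independent of the chosen surgery presentation of $M$, of the opening of $L^+\cup L^0\cup L^-$ into the string link $T$, and of the ordering of components. The plan is to combine the classical Hennings argument with the modified-trace formalism of Theorems \ref{mTrace} and \ref{main}, handling separately the three flavours of strand: surgery strands of $L^0$ closed by $\mu$, central-colored strands of $L^+$ closed by $z_j\mu$, and strands of $L^-$ closed by pairing with trace classes via $\mTr$.

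First I would verify that the expression is well-defined. By Section \ref{S:Universal} the universal invariant $J_T$ of the string link $T$ lies in $(\Urest^{\otimes m})^\Urest$. Applying the character $z^+\mu^{\otimes m_+}\otimes\mu^{\otimes m_0}$ to the first $m_++m_0$ factors produces an element $y\in (\Urest^{\otimes m_-})^\Urest$: this uses the right-integral property $(\mu\otimes\Id)\Delta=\mu\cdot\one$ to ensure invariance, and the fact that each $z_j$ is central to absorb it into $\mu$ without destroying symmetry. The resulting pairing $\langle y,h^-\rangle=\mTr_{m_-}(l_y r_{h^-})$ makes sense because $\mTr_{m_-}$ is cyclic by Theorem \ref{mTrace}, hence descends to $\HH(\Urest)^{\otimes m_-}$ in the second argument. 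Independence of the ordering of components is then immediate, as a permutation acts compatibly on both sides. Independence of the opening reduces to Markov-type moves for each strand: for strands of $L^+$ or $L^0$ they follow from the symmetry of $z_j\mu$ and $\mu$ as linear functionals on $\Urest$ (unimodularity of $\Urest$ and centrality of $z_j$), and for strands of $L^-$ they follow from cyclicity of $\mTr$ combined with the partial-trace identity of Theorem \ref{main}.

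It then remains to verify the two Kirby moves on $L^0$. For the stabilization move (K1), inserting a disjoint $\pm1$-framed unknot multiplies the invariant by $\mu(v^{\pm1})$; a direct computation in $\UD$ with the fixed normalization of $\mu$ gives $\mu(v^{\pm1})=\delta^{\mp1}$, so the prefactor $\delta^s$ exactly cancels the change of signature. For the handle slide (K2) I would follow the Hennings reduction: the Drinfeld-map element $(\mu\otimes\Id)(R_{21}R)\in Z(\Urest)$ behaves like the identity on any module once contracted against a $\mu$-closed strand, which handles slides of other $L^0$-strands. Centrality of $z_j$ extends this to slides of $L^+$-strands. For $L^-$-strands one uses both the partial-trace property from Theorem \ref{main} and the vanishing of $[\Urest,\Urest]$ in $\HH(\Urest)$ to absorb the residual action and permutation terms produced by the slide.

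The main obstacle, as is typical for Hennings-type constructions, is the handle-slide invariance (K2) for non-surgery strands. One must show that the integral on the ribbon extension $\UD$ interacts correctly with the modified trace on $\Upmod$, so that sliding an $L^\pm$-strand over a $\mu$-closed surgery strand changes $J_T$ only by terms that either commute past $z^+$ (using centrality) or lie in $[\Urest,\Urest]$ in the relevant tensor factor (using the trace-class nature of the $h_k$). Theorem \ref{main} is indispensable here, as its partial-trace identity is precisely what couples the integral on $\UD$ to $\mTr$ when strands are rearranged.
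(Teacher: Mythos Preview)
Your overall two-step plan (first independence of the string-link presentation, then Kirby moves) matches the paper's, but you have inverted where the real work lies, and this creates a genuine gap.

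For the Kirby moves you overcomplicate matters. The paper dispatches handle slides in one line: the defining equation $(\mu\otimes\Id)\Delta(x)=\mu(x)\one$ of the right integral guarantees invariance under sliding \emph{any} strand (whether it belongs to $L^+$, $L^0$, or $L^-$) over a component of $L^0$, exactly as in the classical Hennings argument. No appeal to the modified trace, to Theorem~\ref{main}, or to the Drinfeld map is needed here; your statement that ``Theorem~\ref{main} is indispensable'' for K2 is simply wrong. (Incidentally, $(\mu\otimes\Id)(R_{21}R)$ is a central element but not the identity; that observation is about factorizability, not about handle slides.)

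Conversely, the step you treat lightly---independence of the opening of the $L^-$ strands---is where the paper actually spends its effort, and your sketch ``cyclicity of $\mTr$ combined with the partial-trace identity'' does not by itself produce a proof. The paper's argument is more concrete: first use Lemma~\ref{XXX} (the fact that $z_j\mu$ and $\mu$ lie in $\qCh(\Urest)$) to close the $L^+\cup L^0$ strands; then use the partial trace property of $\t$ recursively to close $m_--1$ of the $L^-$ strands, each closure being a quantum character of the form $\tr_l^{\PP_j^\pm}$ or its composition with $x_j^+$; this reduces everything to a single $(1,1)$-tangle $K$ with $J_K\in Z(\Urest)$. Independence of the remaining cut point is then a purely topological long-knot argument (moving an arc through infinity is realized by Reidemeister moves). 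Without this reduction to a $(1,1)$-tangle, your ``Markov-type moves'' claim for $L^-$ strands is not justified: cyclicity of $\mTr_{m_-}$ gives invariance under conjugation in $\End_\Urest(\Urest^{\otimes m_-})$, but changing the opening of a single $L^-$ component is a Markov \emph{stabilization}-type move, and for that you need precisely the partial-trace-to-single-strand reduction that the paper carries out.
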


When $L^-$ is empty,  the colored Hennings invariants \cite{Hennings} are recovered.
When $L^-$ is a knot, then our invariants are equivalent to the  Murakami center valued logarithmic invariants \cite{jM2013}.
 Thus  $\Hlog$ can be understood as a colored extension of the  Murakami invariants.
An action of the modular group $SL(2,\Z)$ on the center $Z(\Urest)$ was studied in \cite{FGST1}. We expect 
that $\Hlog$ can be used  
  to  extend these mapping class group representation in genus one  to a refined  TQFT  with full
 functorial and monoidal properties.

The paper is organized as follows. In Section 2  we 
define the restricted quantum group $\Urest$ and its braided extension $\UD$. In Section 3 we discuss 
their categories of finite dimensional modules. The universal  tangle invariant is constructed in Section 4, where  we also compute a basis
for the space of trace classes $\HH(\Urest)$. Our main theorems are proved in the two last sections.

\subsection*{Acknowledgments}
The authors are grateful to  Azat Gainutdinov and Thomas Kerler for helpful discussions. The work was  supported by
 NCCR SwissMAP founded by the Swiss National Science Foundation. NG was partially supported by NSF grants
DMS-1308196 and MS-1452093. He also would like to thank CNRS for its support.

\section{Restricted quantum $\SL$ and its braided extension}\label{S:ResQuGr}
\subsection*{Definition of $\Urest$}
Fix an  integer $\rt\geq 2$ and let $q=e^\frac{\pi\sqrt{-1}}{\rt}$ be a
$2\rt^{th}$-root of unity.  
  Let $\Urest={\overline\Urest}_q(\slt)$ be the 
$\C$-algebra given by generators $E, F, K, K^{-1}$ and
relations:
\begin{align*}
E^\rt=F^\rt&=0, & K^{2\rt}&=1,
& KK^{-1}&=K^{-1}K=1,\\  KEK^{-1}&=q^2E, & KFK^{-1}&=q^{-2}F, &
  [E,F]&=\frac{K-K^{-1}}{q-q^{-1}}.
\end{align*}
The algebra $\Urest$ is a Hopf algebra where the coproduct, counit and
antipode are defined by
\begin{align*}
  \Delta(E)&= 1\otimes E + E\otimes K, 
  &\varepsilon(E)&= 0, 
  &S(E)&=-EK^{-1}, 
  \\
  \Delta(F)&=K^{-1} \otimes F + F\otimes 1,  
  &\varepsilon(F)&=0,& S(F)&=-KF,
    \\
   \Delta(K)&=K\otimes K
  &\varepsilon(K)&=1,
  & S(K)&=K^{-1},
    \\
  \Delta(K^{-1})&=K^{-1}\otimes K^{-1}
  &\varepsilon(K^{-1})&=1,
  & S(K^{-1})&=K.
\end{align*}
In what follows we will use Sweedler notation. For
$x\in U$ we write
$$\Delta(x)=\sum x_{(1)}\otimes x_{(2)},\quad
\Delta^{[n]}(x)=\sum x_{(1)}\otimes x_{(2)}\otimes ...\otimes
x_{(n)} \quad {\text {for}}\quad n\geq 1.$$

\subsection*{The center of $\Urest$}
The dimension of the center  $Z(\Urest)$ is $3\rt - 1$.
A  basis   consists of
 $\rt+1$ central idempotents $\ideme_j$ $(0\leq j \leq \rt)$ and $2\rt-2$ elements $\idemw_j^\pm$ $(1\leq j\leq \rt-1)$ in the radical \cite{FGST1}.
 These elements satisfy the following  relations:
\begin{align*}
&\ideme_s  \ideme_t =\delta_{s,t} \ideme_s & 0\leq s,t \leq \rt\\
&\ideme_s  \idemw_t^\pm=\delta_{s,t} \idemw_t^\pm & 0\leq s \leq \rt, \; 1\leq t \leq \rt-1\\
& \idemw_s^\pm  \idemw_t^\pm=\idemw_s^\pm  \idemw_t^\mp =0& 1\leq s , t\leq \rt-1.
\end{align*}

\subsection*{Braided extension}
The Hopf algebra $\Urest$ is not braided, see \cite{KondoSaito}.
However, it can be realized as a Hopf subalgebra of the following braided Hopf algebra.  Let $\UD$ be the Hopf algebra generated by $e, \phi, k$ and $k^{-1}$ with the relations:
\begin{align*}
&&e^\rt=\phi^\rt&=0, & k^{4\rt}&=1,\\
 kk^{-1}&=k^{-1}k=1, & kek^{-1}&=qe, & k\phi k^{-1}&=q^{-1}\phi, &
  [e,\phi]&=\frac{k^2-k^{-2}}{q-q^{-1}},
\end{align*}
and Hopf algebra structure:
\begin{align*}
  \Delta(e)&= 1\otimes e + e\otimes k^2, 
  &\varepsilon(e)&= 0, 
  &S(e)&=-ek^{-2}, 
  \\
  \Delta(\phi)&=k^{-2} \otimes \phi + \phi\otimes 1,  
  &\varepsilon(\phi)&=0,& S(\phi)&=-k^2\phi,
    \\
   \Delta(k)&=k\otimes k
  &\varepsilon(k)&=1,
  & S(k)&=k^{-1},
    \\
  \Delta(k^{-1})&=k^{-1}\otimes k^{-1}
  &\varepsilon(k^{-1})&=1,
  & S(k^{-1})&=k.
\end{align*}
The Hopf algebra $D$ has two special invertible elements: the $R$-matrix
$$R=\frac{1}{4\rt} \sum_{m=0}^{\rt -1} \sum_{n,j=0}^{4\rt-1} \frac{(q-q^{-1})^m}{[m]!}q^{m(m-1)/2+m(n-j)-nj/2}e^mk^n \otimes \phi^mk^j$$
 and the ribbon element 
$$\qv=\frac{1-i}{2\sqrt{\rt}}  \sum_{m=0}^{\rt -1} \sum_{j=0}^{2\rt-1} \frac{(q-q^{-1})^m}{[m]!} q^{-m/2 +mj + (j+p+1)^2/2}\phi^me^mk^{2j}$$
where $q^{\frac{1}{2}}=e^{\frac{i\pi}{2\rt}}$, $[m]=\frac{q^m-q^{-m}}{q-q^{-1}}$ and $[m]!=[m][m-1]...[1]$.
The following theorem is well known, see \cite{FGST1}.
\begin{theorem}\label{T:UDbraided}
The triple $(\UD, R,r)$ is a ribbon 
 Hopf algebra.
\end{theorem}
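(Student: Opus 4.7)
My plan is to exhibit $\UD$ as (essentially) a Drinfeld double of a Borel-type Hopf subalgebra and then deduce both the quasitriangular and the ribbon structure from general quantum-double theory, matching the canonical expressions against the explicit $R$ and $r$ in the statement.

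First I would set up a triangular decomposition. Let $B^+ \subset \UD$ be the Hopf subalgebra generated by $e$ and $k^{\pm 1}$, with PBW basis $\{e^m k^n : 0 \le m \le \rt-1,\ 0 \le n \le 4\rt-1\}$, and let $B^-$ be the analogous subalgebra generated by $\phi$ and $k^{\pm 1}$. The defining relation $[e,\phi] = (k^2-k^{-2})/(q-q^{-1})$ together with the coproducts $\Delta(e) = 1 \otimes e + e \otimes k^2$ and $\Delta(\phi) = k^{-2}\otimes \phi + \phi \otimes 1$ are precisely the compatibilities needed to realize $\UD$ as (a quotient of) the quantum double $D(B^+)$ along the Hopf pairing determined by $\langle e,\phi\rangle = 1/(q-q^{-1})$ on the nilpotent part and by the non-degenerate bicharacter $(k^n, k^j) \mapsto q^{-nj/2}/\sqrt{4\rt}$ on the grouplike part $\langle k\rangle \simeq \Z/4\rt$. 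The canonical $R$-matrix $R = \sum e_i \otimes e^i$ written in dual PBW bases then expands to exactly the displayed formula, and the three quasitriangularity axioms follow from the standard theorem on quantum doubles.

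For the ribbon element I would first compute the Drinfeld element $u = \sum S(R_{(2)}) R_{(1)}$ and identify a grouplike pivot $g$ implementing $S^2$. A short computation gives $S^2(e) = q^2 e$ and $S^2(\phi) = q^{-2}\phi$, so $g := k^2$ satisfies $S^2(x) = gxg^{-1}$ for all $x \in \UD$. The ribbon candidate is then $r = u g^{-1}$, up to normalization; expanding this in the PBW basis forces the nilpotent part to assemble into the displayed $(q-q^{-1})^m/[m]!$ series and the grouplike part into a finite Gaussian sum over $\Z/4\rt$, producing the prefactor $(1-i)/(2\sqrt{\rt})$ and the quadratic exponent $(j+\rt+1)^2/2$ after completing the square. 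The remaining ribbon axioms — centrality of $r$, $S(r) = r$, $\varepsilon(r) = 1$, $r^2 = u S(u)$, and $\Delta(r) = (R_{21}R)^{-1}(r \otimes r)$ — all reduce to standard identities in a Drinfeld double once $r = u g^{-1}$ is in hand: the coproduct formula follows from the corresponding property of $u$ combined with the grouplikeness of $g$, centrality follows because $g$ implements $S^2$ and $u$ is a twisted central element, and $S(r) = r$ uses $S(g) = g^{-1}$ together with $S(u) u = u S(u)$.

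The main obstacle is computational rather than conceptual: matching the explicit exponent $-m/2 + mj + (j+\rt+1)^2/2$ in the stated formula for $r$ against the canonical expression $u g^{-1}$. This is a Gaussian-sum manipulation sensitive to the specific choice of square root $q^{1/2} = e^{i\pi/2\rt}$, but once it is carried out, every remaining verification reduces to routine PBW bookkeeping afforded by the quantum-double identification.
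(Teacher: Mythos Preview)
The paper does not prove this theorem; it simply records it as well known, citing \cite{FGST1}. Your approach via the Drinfeld double is the standard one and is essentially how the result is established in the literature: the identification of $\UD$ as a quotient of $D(B^+)$ by the diagonal Cartan, and the resulting $R$-matrix computation, are routine if tedious.

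There is, however, one concrete error in your ribbon computation. The element $g = k^2$ does implement $S^2$, but the ribbon element $u k^{-2}$ it produces is \emph{not} the $r$ displayed in the statement. As the paper itself observes in the remark following the definition of the balancing element, $K = k^2$ is an alternative balancing element yielding a different ribbon structure on $\UD$. To recover the specific $r$ in the theorem you must take $g = k^{2\rt+2}$; the two choices differ by the central grouplike $k^{2\rt}$, which squares to $1$, so both are legitimate square roots of $uS(u)^{-1}$, but only $g = k^{2\rt+2}$ produces the quadratic exponent $(j+\rt+1)^2/2$ after your Gaussian completion of the square. With that single correction your sketch goes through.
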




Let  us call
$M=R_{21} R$  the double braiding or {\it monodromy},
where  $R_{21}=\sum_i \beta_i\otimes \alpha_i$
with  $R=\sum_i \alpha_i\otimes \beta_i$.
A Hopf algebra $A$ is called {\it factorisable} if its monodromy matrix
can be written as
$$M=\sum_i m_i\otimes n_i$$
where $m_i$ and $n_i$ are two bases of $A$.
The Hopf algebra $D$ is not factorisable.
There is a Hopf algebra embedding $\Urest\to \UD$ given by 
$$ E\mapsto e, \; F\mapsto \phi,\; \text{ and } K \mapsto k^2.$$ 
It is easy to check that
$\qv\in \Urest$,  and the 
monodromy
$M= R_{21} R \in \Urest\otimes \Urest$.
Moreover, $\Urest$ is factorisable.
\subsection*{Ribbon and balancing elements of $U$}
 Let $u=\sum_i S(\beta_i)\alpha_i$ 
   be the canonical element implementing the inner-automorphism $S^2$,
  i.e. $S^2(x)=u x u^{-1}$ for any $x\in \UD$,
  and satisfying
  $$\Delta(u)=M^{-1}(u \otimes u).$$
 Using the formula for the $R$-matrix, 
 it is easy to check that $u\in \U$.
The ribbon element $r\in \U$ is central and invertible, 
such that
\begin{equation}\label{ribbon}
r^2=uS(u), \quad S(r)=r,   \quad
\varepsilon(r)=1 \quad \Delta(r)=M^{-1}(r\otimes r).
\end{equation} 
Using them we define
 $$
g:=\qv^{-1}u=k^{2\rt +2}=K^{\rt +1}\in \U$$ 
  the {\it balancing}   element. 
 This element is grouplike, i.e.
 \begin{equation}
 \label{bal}
 \Delta(g)=g\otimes g,
  \quad \varepsilon(g)=1,\quad {\text{ and}} \quad
 gxg^{-1}=S^2(x) \end{equation}
  for any  $x\in \U$.
 The balancing  element
 will be used to define the pivotal structure. 
 
 \begin{remark} As it was shown by Drinfeld,
 equations \eqref{bal}
 determine $g^2$ only.
 Hence $g'=K$ is another balancing element, which  will not be considered in this paper.
\end{remark}


\subsection*{Right integral}
Recall that a {\em right integral}  $\mu\in U^*$ is defined
by the following system of equations
$$(\mu\otimes \Id)\Delta(x)=\mu(x) \one
\quad{\text{for all}}\quad x\in \Urest .
$$ 
For any finite dimensional Hopf algebra over
a field of zero characteristics, there is a unique solution to these equations up to a scalar.
In our case in the PBW basis, it is given by the formula 
$$\mu(E^mF^nK^l)=\zeta \delta_{m,\rt-1}\delta_{n,\rt-1}\delta_{l,\rt+1}\ .$$
We fix  normalisation as in \cite{jM2013} by setting
 $$\zeta=-\sqrt{\frac{2}{\rt}}([\rt-1]!)^2
 .$$
The evaluation on the ribbon element and its inverse are given by 
$$\mu(r) =\frac{1-i}{\sqrt{2}}q^{\frac{3-\rt^2}{2}}
=\frac{1}{\mu(r^{-1})}=\delta\ .$$
One can show that $\mu$ belongs to the space of quantum characters
$$\qCh(\Urest):=\{\phi \in \Urest^* \;|\; \phi ( xy)=\phi(S^2(y)x) \quad\text{for any}\quad x,y\in \Urest\} .$$
 The center $Z(\Urest)$ acts on
 $\qCh(\Urest)$ by $z\phi(x):=\phi(zx)$ for any $z\in Z(U)$ and $x\in U$. Under this action
   $\qCh(\Urest)$ is a free module of dimension one with basis given by the right integral $\mu$.
 Hence, as a vector space $\qCh(U)$ has dimension $3p-1$.

The space of quantum characters $\qCh(\Urest)$ is naturally isomorphic to the
space of characters.
%
Indeed, we can define the map
\begin{equation}\label{ch-qch}
Q:\qCh(U) \to \Ch(U) \quad{\text {by sending}}\quad
\phi \mapsto \phi_g
\end{equation}
where $\phi_g(x):=\phi(g x)$ and $g$ is
the balancing element. Cyclicity can be verified as follows:
\begin{align*}\label{qChar}
\phi_g(xy)=\phi(g xy)=\phi ( S^2(y) g x)=
\phi(gyx)=\phi_g(yx) 
\end{align*}
The inverse map is given by sending
 $\psi \in \Ch(A)$ to  $\psi_{g^{-1}}
\in \qCh(A)$. Hence $Q$ is an isomorphism.  
%

We get that the dimension of the dual vector spaces $\Ch(\Urest)$ and $\HH(\Urest)$ is also $3p-1$.

\section{Categories of modules }\label{S:modules}
In this section we will study 
 the $\C$-linear categories of finite dimensional modules over $\UD$ and
 $\Urest$, which we denote by $\Dmod$ and $\Umod$, respectively.

\subsection*{Category $\Dmod$}
The category $\Dmod$ is ribbon, with the usual braiding
$$c_{V,W}: V\otimes W\to W\otimes V,\quad \text{ given by }\quad u\otimes v \mapsto \tau R(u\otimes v),$$
 where $\tau(x\otimes  y)=y\otimes x$,   twist 
\begin{equation*}\label{E:twisttheta}
\theta_V:V\to V,\quad\text{given by}\quad  v\mapsto \qv^{-1} v
\end{equation*}
and compatible duality
\begin{align}\label{E:DualitiesC}
&\coev_{V} : \:\: \C \rightarrow V\otimes V^{*} , \text{ given by } 1 \mapsto \sum_i v_i\otimes v_i^*, \notag\\
&\ev_{V}: \:\: V^*\otimes V\rightarrow \C, \text{ given by } f\otimes v \mapsto f(v), \notag\\ 
&\tcoev_V: \:\: \C \rightarrow V^{*}\otimes V\quad \text{ given by } 
1 \mapsto \sum_i v_i^*\otimes g^{-1}v_i, \notag\\
& \tev_V: \:\: V\otimes V^*\rightarrow \C \text{ given by } v\otimes f \mapsto f(gv)
\end{align}
where $g$
is the balancing element. Using properties of $r$, one can check that
the twist is {\it self-dual}, i.e.
$\theta_{V^*}=\theta^*_V$
.

The duality morphisms \eqref{E:DualitiesC} define 
pivotal structure on $\Dmod$ (see e.g. \cite{GPV}). In particular, 
in the pivotal setting, 
 one can define left and right
(categorical) traces of any endomorphism $f: V \to V$ as
$$\tr_l(f)=\ev_V(\Id_{V^*}\otimes f)\tcoev_V\quad
\tr_r(f)=\tev_V(f\otimes \Id_{V^*})\coev_V \, $$ and dimensions of objects.

A {\it spherical} category is a pivotal category whose
left and right traces are equal, i.e.
$\tr_l(f)=\tr_r(f)$ for any endomorphism $f$. 
It is easy to see
that any ribbon category is spherical. We call
$\tr_l(\Id_V)=\tr_r(\Id_V)$ the {\it quantum} dimension of $V$.

We will use a standard graphical calculus to represent 
morphisms in $\Dmod$   by diagrams in the plane which are read from the bottom to the top.  

In what follows, we will need {\it partial} categorical traces
of endomorphisms.
Given $V,W\in \Dmod$ and $f:V\otimes W\to V\otimes W$ let $\tr^V_l$ and $\tr^W_r$ be the left and right {\it partial} traces defined as follows
\begin{align}
&\tr_l^V(f)=(\ev_V \otimes \Id_W)(\Id_{V^*} \otimes f)(\tcoev_V \otimes \Id_W)= \put(14,20){{\tiny $V$}} \epsh{figleftptr2}{45pt} \put(-19,-1){{\footnotesize $f$}}\put(-11,20){{\tiny $W$}} \notag \\
&\tr_r^W(f)=(\Id_V \otimes \tev_W)(f \otimes \Id_{W^*})(\Id_V \otimes \coev_W)= \put(17,-18){{\tiny $W$}}\epsh{figrightptr2}{45pt}  \put(-27,0){{\footnotesize $f$}}\put(-37,-18){{\tiny $V$}}. \label{E:PartialLRtrace}
\end{align}

\subsection*{Category $\Umod$}
\label{SS:DualityPivStr}
Let us call a module   {\it simple}, if
its endomorphism ring is one dimensional. A module
is {\it projective} if it is a direct summand of a free module.

The category $\Umod$ includes
 the $s$-dimensional simple modules $ \repX^\pm_s$, 
 and their  projective covers $\PP^\pm_s$
 for $1\leq s\leq \rt$, which are
 $2\rt$ dimensional for $1\leq s< \rt$.
The simple module $ \repX^\pm_s$ is
determined by its highest weight vector $v$ with the action $Ev=0$ and $Kv=\pm q^{s-1}v$. 
It  is projective if and only if $s=p$.

A category  is called {\it unimodular}, if the projective cover of the trivial module is self-dual.  Since
$\PP^+_1$ is self-dual (as well as all other modules defined above), $\Umod$  is unimodular.

The category $\Umod$ 
inherits the pivotal structure,
twist and  double braiding  from $\Dmod$.
 The double braiding is
$$M_{V,W}: V\otimes W\to V\otimes W,\quad \text{ given by }\quad x\otimes y \mapsto M(x\otimes y),$$
where $M$ is the monodromy matrix;
the self-dual  twist  and duality are given by
\eqref{E:DualitiesC}.
It can be checked that $\Umod$ is {\it twisted} category 
with duality in the sense of Brugui\`eres \cite{Br}.


Let $\Upmod$ be the full  subcategory of $\Umod$ consisting of projective modules. This category is non-abelian. 
To compute the modified trace on $\Upmod$, we will need an
explicit structure of this category.

\subsection*{Structure of $\Upmod$}
A module is {\it indecomposable} if it does not
decompose as a direct sum of two
modules.
 The indecomposable projective $\Urest$-modules are classified up to isomorphism in \cite{FGST2}: they are precisely the
 projective covers $\PP^\pm_j$ of the simple modules  
 where  $j=1,...,\rt$.   In particular, $\PP^\pm_\rt$ is a simple module with highest weight $\pm q^{\rt-1}$. The module $\PP^+_1$ is the projective cover of the trivial one.  

We will recall some facts about these projective modules. For $1\leq j\leq \rt-1$ let   
\begin{equation}\label{E:BasisP}
\{\junxpm_k,\junypm_k\}_{0\leq k \leq \rt-j-1}\cup \{\junapm_n,\junbpm_n\}_{0\leq n \leq j-1}
\end{equation}
be the basis of $\PP^\pm_j$ given in \cite{FGST1} (see Section C.2 of \cite{FGST1} for the defining relations).  

Following \cite{CGP3} we call a weight vector $v$ \emph{dominant} if $(FE)^2v = 0$. The vector $\junbp_0$ (resp. $\junym_0$) is a dominant  vector of $\PP^+_j$ (resp. $\PP^-_j$) with weight $\pm q^{j-1}$.  Let $x_j^+$ (resp. $x_j^-$) be the nilpotent endomorphism of $\PP^\pm_j$ determined by $\junbp_0\mapsto \junap_0$ (resp. $\junym_0\mapsto \junxm_0$), and let  $a_j^+, b_j^+: \Proj_j^+\rightarrow \Proj_{\rt-j}^{-}$ and  $a_j^-, b_j^-: \Proj_{j}^{-}\to \Proj_{\rt-j}^+$ be the morphisms defined by 
 $$a_j^+(\junbp_0)= \junam_0, \;\; b_j^+(\junbp_0)= \junbm_0, \;\; a_j^-(\junym_0)= \junxp_0 \text{ and } b_j^-(\junym_0)= \junyp_0,$$
  respectively. 
  Analysing the images of the dominant weight vector of $\Proj_s^\epsilon$, we  can completely determine the $\Hom$-spaces between indecomposable projective modules.  Here is the list 
  of the non-trivial ones:
  \begin{itemize}
  \item    
  the endomorphism ring $\End_\Urest(\PP^\pm_j)$ is one dimensional  for $j=p$ and  two dimensional with basis $\{\Id_{\PP^\pm_j},x_j^{\pm}\}$
 for $1\leq j <\rt$,
 \item the $\Hom$-spaces $ \Hom_\Urest(\Proj_j^+,\Proj_{\rt-j}^{-})$ and $ \Hom_\Urest(\Proj_{j}^-,\Proj_{\rt-j}^{+})$)
 are two dimensional with respective basis $\{a_j^+, b_j^+\}$ and $\{a_j^-, b_j^-\}$, for $1\leq j <\rt$.
\end{itemize}

\begin{proposition}[Proposition 4.4.4 of \cite{FGST1}]\label{P:Prop444}
  The action of the center  on the indecomposable 
  projective modules  is as follows.
$$
\begin{array}{|c||c|c|c|c|}
\hline
&\Proj_p^-&\Proj_p^+&\Proj_j^+,\text{ $1\leq j<p$}& \Proj_{p-j}^-,\text{ $1\leq j<p$}\\
\hline\hline
\ideme_0&\Id_{\Proj_p^-}&0&0&0
\\ \hline
\ideme_p&0&\Id_{\Proj_p^+}  &0&0
\\ \hline
\ideme_j,\text{ $1\leq j<p$} &0&0&\Id_{\Proj_j^+}&\Id_{\Proj_{p-j}^-} \\ \hline
\idemw_j^+,\text{ $1\leq j<p$}&0&0&x_j^+&0 \\ \hline
\idemw_j^-,\text{ $1\leq j<p$}&0&0&0&x_{p-j}^-\\ \hline
\end{array}$$ 
 
\end{proposition}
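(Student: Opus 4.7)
The plan is to verify this table entry by entry, using the block decomposition of $\Urest$ and the characterization of the central elements $\ideme_j, \idemw_j^\pm$ as the unique basis of $Z(\Urest)$ with certain structural properties.

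First I would establish the block decomposition. Standard representation-theoretic arguments (linkage via $\Ext^1$, together with the classification of indecomposable projectives recalled in Section 3) show that $\Urest$ decomposes as an algebra into $\rt+1$ two-sided ideals: two ``simple'' blocks whose unique indecomposable projective is $\Proj_\rt^-$, respectively $\Proj_\rt^+$, and $\rt-1$ blocks of non-semisimple type indexed by $1\leq j\leq \rt-1$, each supporting exactly two indecomposable projectives $\Proj_j^+$ and $\Proj_{\rt-j}^-$. The primitive central idempotents must be in bijection with blocks, so the $\ideme_s$ are forced to be these block idempotents; this immediately gives all rows of the table involving $\ideme_s$, since an idempotent in a given block acts as the identity on every projective in that block and as zero on projectives in other blocks. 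The defining relations $\ideme_s\ideme_t=\delta_{s,t}\ideme_s$ and $\ideme_s\idemw_t^\pm=\delta_{s,t}\idemw_t^\pm$ assert exactly that the $\ideme_s$ partition the unit and that $\idemw_j^\pm$ lies in the block labeled $j$; hence $\idemw_j^\pm$ automatically acts by zero on $\Proj_\rt^\pm$ and on $\Proj_k^+, \Proj_{\rt-k}^-$ for $k\neq j$.

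Next I would analyze the remaining action of $\idemw_j^\pm$ on the non-simple block $\mathcal{B}_j$ with projectives $\Proj_j^+$ and $\Proj_{\rt-j}^-$. Using the $\Hom$-space description from Section 3, the algebra of $\Urest$-endomorphisms of $\Proj_j^+\oplus \Proj_{\rt-j}^-$ has a two-dimensional center modulo identity, spanned by the nilpotent endomorphisms $x_j^+\oplus 0$ and $0\oplus x_{\rt-j}^-$ (the morphisms $a_j^\pm, b_j^\pm$ do not give central elements because they fail to commute with themselves/with scalars in the other summand). The relations $\idemw_j^+\idemw_j^-=(\idemw_j^\pm)^2=0$ together with the linear independence of $\idemw_j^+,\idemw_j^-$ imply that their actions form a basis of this two-dimensional space. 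The convention of FGST1, which we adopt, fixes the splitting by declaring that $\idemw_j^+$ acts as $x_j^+$ on $\Proj_j^+$ (the block-member with a ``$+$'' superscript) and kills $\Proj_{\rt-j}^-$, while $\idemw_j^-$ does the reverse. This labels the remaining two rows of the table.

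The main obstacle, and the step requiring the explicit computations of \cite{FGST1}, is exhibiting concrete central elements $\idemw_j^\pm$ in $\Urest$ that realize the claimed actions, rather than merely showing that such elements must exist abstractly. In practice one writes down the Casimir element $C=FE+\tfrac{qK+q^{-1}K^{-1}}{(q-q^{-1})^2}$, observes that polynomials in $C$ lie in $Z(\Urest)$, and uses the eigenvalue structure of $C$ on each $\Proj_j^\pm$ (which has a $2\times 2$ Jordan block for the eigenvalue indexed by $j$) to produce, via appropriate interpolation polynomials in $C$, both the idempotents $\ideme_j$ and the nilpotents $\idemw_j^\pm$; the $\pm$-splitting of the nilpotent part is then forced by the sign of $K$ on the dominant weight vectors. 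Once these explicit formulas are in hand, evaluating them on the basis \eqref{E:BasisP} of each $\Proj_j^\pm$ reproduces the table directly and finishes the proof.
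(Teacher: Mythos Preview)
The paper does not give its own proof of this proposition; it is quoted as Proposition~4.4.4 of \cite{FGST1} and used as input. Your outline is a faithful summary of how the result is established there: the block decomposition of $\Urest$ into $\rt+1$ two-sided ideals identifies the $\ideme_s$ as the primitive central idempotents, which accounts for the first three rows, and in each non-simple block the center of the basic algebra is three-dimensional with nilpotent part spanned by $x_j^+\oplus 0$ and $0\oplus x_{\rt-j}^-$, which accounts for the last two. You correctly flag that the real work is the explicit construction of $\idemw_j^\pm$ inside $\Urest$; note that polynomials in the Casimir $C$ alone cannot separate $\Proj_j^+$ from $\Proj_{\rt-j}^-$ since $C$ has the same generalized eigenvalue on both, so one genuinely needs the extra $K$-projectors (or equivalently the Radford--Drinfeld map used in \cite{FGST1}) to split the nilpotent part into $\idemw_j^+$ and $\idemw_j^-$, exactly as you indicate in your final sentence.
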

\vspace*{2mm}

 \section{Tangle invariants  and
 the trace of $\U$}\label{S:Universal}
Our links and tangles are always assumed to be framed and oriented. The diagrams are going from bottom to top.
A {\it string link} is a tangle without closed component whose arcs  end
at the same order as they start, with upwards orientation. A pure braid is an example.

\subsection*{Reshetikhin--Turaev invariant}
Given any ribbon category $\mathcal C$,
Turaev showed in \cite{Tu}, that there exists a 
canonical ribbon functor 
$$F_{\mathcal C}:\Rib_{\mathcal{C}}\to \mathcal C,$$
where $\Rib_{\mathcal{C}} $ is the category of $\mathcal{C}$-colored ribbon graphs.
Applying this construction to  $\Dmod$, for an $m$-component string link $T$, colored with the regular representation $D$, 
we obtain $ F_{\UD}(T)\in \End_\UD(\UD^{\otimes m})$.
Here we use shorthand $F_{\UD}$ for $ F_{\Dmod}$. 


The  Reshetikhin--Turaev invariant
of the colored 
link $(L, V_{1}, \dots, V_{m})$
is obtained by evaluating
the categorical right traces on $F_D(T)$, i.e.
\begin{align*}\label{closure}
J_L(V_1, \dots,V_m):&=(\tr^{V_1}_r \otimes \dots \tr^{V_m}_r) F_D(T)\\ &=
({\rm trace}^{V_1}\otimes\dots \otimes {\rm trace}^{V_m})(g\otimes \dots \otimes g)
F_D(T) 
\end{align*}
where  $T$ is a string link with braid closure isotopic to  $L$. 
Recall also that $\tr^V_r=\tr^V_l$ since both closures are isotopic. Hence, we can replace $g$ by $g^{-1}$ in the last line.
If
 one of the $V_i$'s is projective, this invariant vanishes.
\subsection*{Universal invariant}
Associated with a 
ribbon Hopf algebra there is another 
powerful invariant -- the {\it universal} invariant of links and tangles introduced by Lawrence \cite{Lawrence2} for some quantum groups 
and defined by Hennings \cite{Hennings} in the general case.
For  a string link $T$ with $m$ components, its universal invariant $J_T$ is obtained by pasting together
pieces shown in  Figure \ref{F:univ_picture}.
Here we write $R=\sum \alpha \otimes \beta$, $R^{-1}=\sum \overline \alpha \otimes \overline\beta$.
\begin{figure}[h]
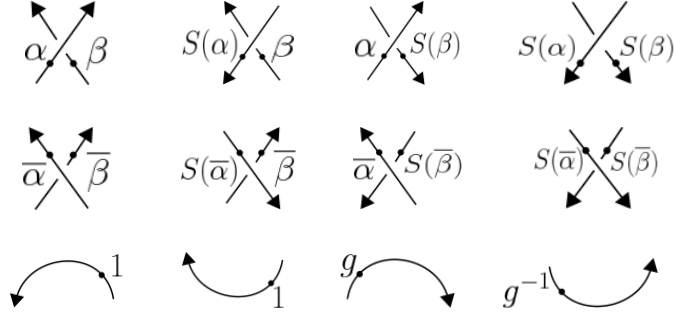

 \centering
 \begin{tabular}{cccc}
 \Png{1.1cm}{univ+}\ &\ \Png{1.1cm}{univ+s_}\  &\ \Png{1.1cm}{univ+_s}\  &\ \Png{1.1cm}{univ+ss}\\[12pt]
 \Png{1.1cm}{univ-}\ & \  \Png{1.1cm}{univ-s_}\ & \  \Png{1.1cm}{univ-_s}\  &\ \Png{1.1cm}{univ-ss}\\[12pt]
  \Png{0.7cm}{univ_ev} &\Png{0.73cm}{univ_coev}&
  \Png{0.7cm}{univ_tev} & \Png{0.65cm}{univ_tcoev}
  \end{tabular}
  \caption{\label{F:univ_picture} Local formulas for the universal invariant}
 \end{figure}

More precisely, for each arc of $T$ we obtain an element of $\UD$ by writing a word from right to left with labels read using the order
following the orientation. Thus we get $J_T\in \UD^{\otimes m}$. This element does not change by Reidemeister moves.
The original proof of the invariance was stated for a link. The argument was extended to tangles  in  \cite[7.3]{Hab_bottom}.

Note that in \cite{Hab_bottom} Habiro uses different 
conventions. His tangles are depicted from top to bottom and orientations are reversed. Hence,
our model can be recovered from his one after reflecting  over a horizontal axis. The universal invariants coincide.
In \cite{Oht}, Ohtsuki defines the universal invariant using  the opposite to our orientation convention, but  the word is written there
from left to right when following the orientation. Again the universal invariant is finally the same as ours.


\subsection*{Relation between them}
The universal invariant is known to dominate Reshetikhin--Turaev invariants in the following sense:
$$J_L(V_1, \dots, V_m)=({\tr}^{V_1}_r \otimes \dots \otimes {\tr}^{V_m}_r) J_T$$
 The proof is given in
 \cite[Theorem 4.9]{Oht}.

Let us denote by $(\Urest^{\otimes m})^{\Urest}\subset \Urest^{\otimes m}$ the submodule centralising the left action, i.e.
 $$x\in  (\Urest^{\otimes m})^{\Urest}\quad
 \Leftrightarrow \quad\Delta^{[m]}(h) x= x \Delta^{[m]}(h) \quad{\text {for all}} \quad h\in \Urest\ .$$

The following lemma is folklore, but we are adding the proof for completeness.

\begin{lemma}\label{E:RTleftAction}
The Reshetikhin--Turaev intertwinner $ F_{\UD}(T)$
is equal to left multiplication by $J_T$. In addition, 
for an $m$-component string link $T$, $J_T$
belongs to $(\Urest^{\otimes m})^{\Urest}$.
\end{lemma}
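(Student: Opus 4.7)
The plan is to prove the two assertions in sequence, leveraging the local structure of $F_\UD$ and the pure-permutation nature of a string link.

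For the identity $F_\UD(T)=l_{J_T}$, I would argue locally on each elementary piece of $T$. A string link has no caps or cups, only upward-oriented positive and negative crossings; moreover, its underlying permutation is trivial by definition. Under the ribbon functor a positive crossing at positions $i,i{+}1$ acts as the braiding $c_{\UD,\UD}\colon x\otimes y\mapsto \tau R(x\otimes y)=\sum\beta y\otimes\alpha x$. Reading this strand by strand: the strand originally at position $i$ (carrying $x$) moves to position $i{+}1$ picking up a left factor of $\alpha$, while the strand at $i{+}1$ (carrying $y$) moves to $i$ picking up a left factor of $\beta$. This matches the over/under decoration in Figure~\ref{F:univ_picture} exactly, and the analogous statement holds for negative crossings using $R^{-1}$. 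Since the total permutation is trivial, composing all the elementary braidings gives an endomorphism that acts on $v_1\otimes\cdots\otimes v_m$ by componentwise left multiplication: the $i$-th tensor factor of the multiplier is the ordered product of $R^{\pm 1}$-factors accumulated along strand $i$, which is by definition the $i$-th tensor factor of $J_T$. Hence $F_\UD(T)=l_{J_T}$.

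The centralizer property is then immediate from functoriality. Since $F_\UD(T)$ is a morphism in $\Dmod$, it commutes with $l_{\Delta^{[m]}(h)}$ for every $h\in\UD$; evaluating both compositions on $1^{\otimes m}$ yields $\Delta^{[m]}(h)\,J_T=J_T\,\Delta^{[m]}(h)$ for all $h\in\UD$, and in particular for all $h\in\Urest$.

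The most delicate remaining point is that $J_T$ lies in $\Urest^{\otimes m}$ and not merely $\UD^{\otimes m}$: the $R$-matrix has individual summands involving odd powers of $k$ in both tensor slots, and these are not in $\Urest$. The strategy is to exploit that each strand of a string link is a closed component with matched endpoints, so the $R$-matrix factors accumulated on it form a combinatorially balanced package. One first handles the elementary pieces: a full twist $\sigma_i^{\pm 2}$ between adjacent strands contributes $M^{\pm 1}\in\Urest\otimes\Urest$ (the monodromy lies in $\Urest^{\otimes 2}$ by the discussion following Theorem~\ref{T:UDbraided}), and a kink contributes a power of the ribbon element $\qv\in\Urest$ (whose formula involves only even $k$-powers $k^{2j}$). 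For more general self-crossings of a single strand one must verify by direct computation in the explicit $R$-matrix formula that the accumulated product has only even total $k$-degree on each strand; this reduces to Gauss-sum-type cancellations among the indices $n,j$ parameterizing the $k$-powers, and is the main technical obstacle. The pure-permutation property of string links (as opposed to general tangles, for which $J_T$ need not land in $\Urest^{\otimes m}$) is what makes this cancellation go through.
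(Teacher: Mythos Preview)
Your argument for $F_\UD(T)=l_{J_T}$ rests on the claim that a string link ``has no caps or cups, only upward-oriented positive and negative crossings.'' This is false: a string link is not a pure braid. The paper's definition only requires that the arcs begin and end in the same order with upward orientation; the diagram may well have local maxima and minima (e.g.\ any nontrivially knotted arc). The universal invariant therefore involves the balancing element $g^{\pm 1}$ coming from the $\tev$ and $\tcoev$ pieces of Figure~\ref{F:univ_picture}, and your local comparison has to cover those too. The result is still true and standard (the paper simply points to \cite[Theorem 4.9]{Oht}), but your proof as written handles only the pure-braid case.

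For the key point $J_T\in\Urest^{\otimes m}$, your approach is genuinely different from the paper's, and you have not closed it. You propose to control the $k$-parity of each strand's accumulated word by reducing to monodromies, ribbon factors, and then ``Gauss-sum-type cancellations'' for the remaining self-crossings, which you yourself flag as the main technical obstacle. The paper avoids all of this by invoking a structural theorem of Brugui\`eres \cite[Section 1.3]{Br}: any tangle whose linking matrix is diagonal mod $2$ (in particular any string link) can be written using only evaluations, coevaluations and twists---no crossings at all. Hence $J_T$ is built from $\qv^{\pm 1}$ and $g^{\pm 1}$ via the Hopf-algebra operations, and since $\qv,g\in\Urest$ one gets $J_T\in\Urest^{\otimes m}$ immediately. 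Your direct-computation strategy might be made to work, but Brugui\`eres' result replaces the analytic cancellation with a clean topological input.
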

\begin{proof}
The fact that $F_D(T)$ is the left multiplication by $J_T$
follows directly by comparing the definitions of these
two invariants. More details are given in
the proof of \cite[Theorem 4.9]{Oht}. Hence, 
 multiplication by $J_T$ has to commute with left action,
 we conclude $J_T\in (D^{\otimes m})^D$.

Let us show that for a string link the universal invariant $J_T$ actually belongs to $\Urest^{\otimes m}$. This implies the claim, since $ (\Urest^{\otimes m})^D \subset (\Urest^{\otimes m})^U$.


The linking matrix of a string link is diagonal mod $2$.
In \cite[Section 1.3]{Br}, Brugui\`eres shows that
 {\it any} tangle with this property can be obtained as compositions and tensor products of evaluations, coevaluations and twists.
Thus  the universal invariant $J_T$ is build up from the ribbon element $r$, 
the balancing element $g$,  and their inverses
by applying the Hopf algebra operations.
We obtain $J_T\in \Urest^{\otimes m}$.
  \end{proof}

\subsection*{Evaluations of the universal invariant}
Assume for simplicity that $T$ is a $(1,1)$ tangle,
whose closure is the knot $K$.
Then for any $\phi\in \qCh(U)$, the evaluation
$\phi(J_T) \in \C$ is a knot invariant. To prove this
fact, we need to show that this evaluation does not change by cyclic permutations of the word $J_K= g^{-1}J_T$ obtained
 by applying the  algorithm described above to the 
left closure of $T$. 
Using  \eqref{ch-qch}, we get
$$\phi( J_T)=\phi_g(g^{-1} J_T)= \phi_g (J_K).$$
The last expression does not change by cyclic permutations
since $\phi_g\in \Ch(U)$.
Applying this argument to the $k$ leftmost components of a string link with $m$ strands, we will
get the following.
\begin{lemma}\label{XXX}
Let $T$ be an $m$-component string link and $1\leq k\leq m$. Let $\Phi=\otimes_{j=1}^k \phi_j$ with
$\phi_j\in \qCh(U)$ be a sequence of quantum characters, then
$$ (\Phi \otimes \Id_{U^{\otimes(m-k)}})(J_T)\in (U^{\otimes (m-k)})^U$$
is  invariant of the tangle  obtained from $T$ by closing the  first $k$ components.
\end{lemma}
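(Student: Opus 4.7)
The plan is to establish two things in parallel, both generalizing the $(1,1)$-tangle argument presented in the paragraph preceding the lemma: (a) left $\U$-invariance of $(\Phi\otimes \Id^{\otimes(m-k)})(J_T)$, and (b) independence of the choice of string link $T$ representing the partially closed tangle.

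For (a), I would begin with $J_T\in (\U^{\otimes m})^\U$ from Lemma \ref{E:RTleftAction}, namely $\Delta^{[m]}(h)\, J_T = J_T\, \Delta^{[m]}(h)$ for all $h\in \U$, and apply $\Phi\otimes \Id^{\otimes(m-k)}$ to both sides. To extract the desired $(m-k)$-fold commutation, I rewrite each $\phi_j\in \qCh(\U)$ as $(\phi_j)_g\circ l_{g^{-1}}$ via the isomorphism $Q$ of \eqref{ch-qch}, converting the twisted cyclicity $\phi_j(xy)=\phi_j(S^2(y)x)$ into honest cyclicity of $(\phi_j)_g\in \Ch(\U)$. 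Since $g$ is grouplike one has $\Delta^{[n]}(g)=g^{\otimes n}$, so the inserted $g^{-1}$'s can be slid past the factors of $\Delta^{[m]}(h)$ and absorbed. What remains is precisely $\Delta^{[m-k]}(h)\cdot (\Phi\otimes\Id^{\otimes(m-k)})(J_T) = (\Phi\otimes\Id^{\otimes(m-k)})(J_T)\cdot \Delta^{[m-k]}(h)$.

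For (b), two string links with the same partial closure differ by isotopies of string links (under which $J_T$ is unchanged) together with cyclic rotations of each of the first $k$ strands (corresponding to different basepoint choices on the closed components). Fixing $j\in\{1,\dots,k\}$, the same substitution $\phi_j(x) = (\phi_j)_g(g^{-1}x)$ expresses the $j$th-slot evaluation as $(\phi_j)_g$ applied to $g^{-1}J^j$, i.e.\ to the universal invariant word of the knot obtained by the pivotal closure of the $j$th strand. Cyclic rotation of this word is absorbed because $(\phi_j)_g\in \Ch(\U)$ factors through $\HH(\U)=\U/[\U,\U]$. Iterating over $j=1,\dots,k$ gives the claim.

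The main obstacle is step (a): the cyclicity of $\qCh$ is twisted by $S^2$, which does not align term-by-term with the diagonal coproduct $\Delta^{[m]}$, since $S^2(x)=gxg^{-1}$ and $g$ is grouplike but not central. The key algebraic maneuver is the coherent replacement of each $\phi_j$ by $(\phi_j)_g$, which reduces the twisted cyclicity to ordinary cyclicity and lets one exploit $\Delta(g)=g\otimes g$ to commute the inserted $g^{\pm 1}$'s past the $\Delta^{[m]}(h)$; once this translation is made, the computation becomes routine.
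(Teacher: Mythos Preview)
Your treatment of part (b) --- invariance under the choice of string link representing the partial closure --- is correct and is exactly what the paper does: the substitution $\phi_j(-)=(\phi_j)_g(g^{-1}\cdot-)$ converts each evaluation into an honest character applied to the closed word, and cyclicity of $(\phi_j)_g\in\Ch(\U)$ absorbs the basepoint ambiguity on each of the first $k$ strands. The paper proves this for a $(1,1)$-tangle and then simply asserts the generalization.

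Your argument for part (a), however, does not go through as written. Applying $\Phi\otimes\Id$ to $\Delta^{[m]}(h)J_T=J_T\Delta^{[m]}(h)$ yields, with $J_T=\sum X\otimes Y$ and $\Delta(h)=\sum h_{(1)}\otimes h_{(2)}$,
\[
\sum \Phi\!\bigl(\Delta^{[k]}(h_{(1)})X\bigr)\,\Delta^{[m-k]}(h_{(2)})Y
=\sum \Phi\!\bigl(X\Delta^{[k]}(h_{(1)})\bigr)\,Y\,\Delta^{[m-k]}(h_{(2)}),
\]
and neither side is $\Delta^{[m-k]}(h)\cdot(\Phi\otimes\Id)(J_T)$ or $(\Phi\otimes\Id)(J_T)\cdot\Delta^{[m-k]}(h)$: the Sweedler pieces $h_{(1)}$ and $h_{(2)}$ remain coupled across the two blocks, and cyclicity inside the first $k$ slots cannot decouple them. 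Your proposed ``slide the $g^{-1}$'s past $\Delta^{[m]}(h)$'' step also fails, because $g=K^{\rt+1}$ is grouplike but not central, so $(g^{-1})^{\otimes k}$ does not commute with $\Delta^{[k]}(h_{(1)})$.

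A correct algebraic route (which the paper does not spell out either) is to use the antipode identity
$1\otimes\Delta^{[m-1]}(h)=\sum\bigl(S(h_{(1)})\otimes 1^{\otimes(m-1)}\bigr)\,\Delta^{[m]}(h_{(2)})$,
then invoke the commutation of $J_T$ with $\Delta^{[m]}(h_{(2)})$, and finally apply the $\qCh$ property in the form
$\phi\bigl(S(h_{(1)})\,x\,h_{(2)}\bigr)=\phi\bigl(S^2(h_{(2)})S(h_{(1)})\,x\bigr)$
together with $\sum h_{(1)}S(h_{(2)})\otimes h_{(3)}=1\otimes h$ to collapse the remaining sum. Iterating over the first $k$ slots gives the claim.
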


Further observe that given $\phi\in \qCh(U)$ 
we can obtain another quantum character $\phi_z$ by twisting
$\phi$ with a central element $z\in Z(U)$, where
$\phi_z(x):=\phi(zx)$ for any $x\in U$.
In the logaritmic invariant
 we will use  quantum characters
$\mu$ and $\mu_z$  to evaluate components of $T^0$ and $T^+$,
and the {\it modified trace pairing}
constructed  below for $T^-$.

In the next section we will
define  a family of linear functions
$$\{\t_V:\End_\cat(V)\rightarrow \C \}_{V\in \Upmod}$$
satisfying cyclicity and the partial trace property.
For any $m>0$,   we can use $\t_{U^{\otimes m}}$
 to define 
 a bilinear pairing
$$\langle\ ,\ \rangle:\left(\Urest^{\otimes m}\right)^{\Urest}\otimes 
\Urest^{\otimes m}\rightarrow \C\ ,$$
by the formula
\begin{equation}\label{E:pairing}
\langle x ,y\rangle=\t_{U^{\otimes m}}(l_x\circ r_y)\ .
\end{equation}
Here $l_x$, $r_x$ are the operators of the left and right
multiplication by $x$. 

From cyclicity of $\t$, we obtain an induced
pairing
$$\langle\ ,\  \rangle:\left(\Urest^{\otimes m}\right)^{\Urest}\otimes 
\HH(\Urest^{\otimes m})\rightarrow \C\ ,$$
which  we call the modified trace pairing. 
To achieve the full evaluation,  besides of the basis
for the center, we will need a basis for $\HH(U)$.

\subsection*{The trace $\HH(\Urest)$}
Let us construct a basis of the trace of $U$.

Recall that 
 $0^{\text{th}}$-Hochschild homology or trace of a 
 linear category $\mathcal C$
 is defined by
$$\HH(\mathcal C):=\frac{\oplus_{x\in \mathcal C} \;\mathcal C(x,x)}{fg-gf}\quad
\text{for any}\quad{f:x\to y,g:y\to x }.$$
The image of $x\in \mathcal C$ in $\HH(\mathcal C)$ will be called its trace class and denoted by $[x]$.
For an algebra $A$ (i.e. a category with one object)
this reduces to
$$ \HH(A):=\frac{A}{[A,A]}\quad\text{with}\quad
[A,A]={\rm{Span}}\{xy-yx\;|\;  x,y\in A\}.$$
This space supports a natural action of the center defined by
$z[x]=[zx]$ for any $z\in Z(U)$.
By definition $\HH(\Urest)$ is dual to $\Ch(\Urest)$.

We will use the following well-known fact.
 \begin{proposition} \label{Meq}
 For any finite dimensional algebra $U$, 
$$\HH ({\rm \Upmod})\simeq\HH(U)$$
\end{proposition}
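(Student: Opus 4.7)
The plan is to construct mutually inverse linear maps
$$\Phi\colon \HH(\Urest) \longrightarrow \HH(\Upmod) \quad\text{and}\quad \Psi\colon \HH(\Upmod) \longrightarrow \HH(\Urest),$$
using the regular representation and the matrix trace.

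The map $\Phi$ is built from the projectivity of $\Urest$ as a left module over itself together with the standard identification $\End_\Urest(\Urest) \cong \Urest^{op}$ via $u \mapsto r_u$ (right multiplication). Setting $\Phi(u) := [r_u]$ defines a linear map $\Urest \to \HH(\Upmod)$, and since $r_{uv} = r_v r_u$, a commutator $uv - vu$ is sent to $r_v r_u - r_u r_v$, which vanishes in $\HH(\Upmod)$ by the defining cyclicity relation. Hence $\Phi$ descends to $\HH(\Urest) \to \HH(\Upmod)$.

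For $\Psi$, I would first show that every class in $\HH(\Upmod)$ is represented by an endomorphism of some free module $\Urest^n$. Given $f \in \End_\Urest(P)$ with $P$ projective, pick $\iota\colon P \hookrightarrow \Urest^n$ and $\pi\colon \Urest^n \twoheadrightarrow P$ with $\pi\iota = \Id_P$, and set $\tilde f := \iota f \pi \in \End_\Urest(\Urest^n)$. Applying the cross-object cyclicity relation to the composable pair $\iota f\colon P \to \Urest^n$ and $\pi\colon \Urest^n \to P$ gives
$$[f] = [\pi (\iota f)] = [(\iota f) \pi] = [\tilde f].$$
Now identify $\End_\Urest(\Urest^n) \cong M_n(\Urest^{op})$ and define $\Psi([\tilde f])$ to be the matrix trace of $\tilde f$ reduced modulo $[\Urest,\Urest]$ (noting $[\Urest^{op},\Urest^{op}] = [\Urest,\Urest]$ as subspaces of $\Urest$).

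The main obstacle is well-definedness of $\Psi$: the output must be independent of $n$, of the pair $(\iota,\pi)$, and compatible with the cross-object cyclicity built into $\HH(\Upmod)$. Stabilization $\Urest^n \hookrightarrow \Urest^{n+k}$ as an upper-left block is harmless since the new diagonal entries vanish. For a different presentation of the same $P$, after passing to a common ambient $\Urest^N$ one reduces to the classical fact that the matrix trace induces an isomorphism $\HH(M_n(R)) \cong \HH(R)$ for $R = \Urest^{op}$. The cross-object cyclicity $[gf] = [fg]$ for $f\colon P \to Q$, $g\colon Q \to P$, after embedding $P \oplus Q \hookrightarrow \Urest^N$, becomes the matrix identity $\tr(AB) \equiv \tr(BA) \bmod [\Urest,\Urest]$, a direct computation. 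Once $\Psi$ is known to be well-defined, $\Psi\Phi([u]) = \tr(r_u) = u \bmod [\Urest,\Urest]$ in $M_1(\Urest^{op})$ is immediate, and $\Phi\Psi = \Id$ follows by combining the reduction $[f] = [\tilde f]$ above with the same matrix-trace isomorphism $\HH(M_n(\Urest^{op})) \cong \HH(\Urest^{op})$ applied in the reverse direction.
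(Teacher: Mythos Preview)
Your argument is correct and follows essentially the same route as the paper: both directions use the regular representation via $u\mapsto r_u$ for $\HH(\Urest)\to\HH(\Upmod)$, and the embedding of a projective into a free module together with the Morita invariance $\HH(M_n(\Urest^{op}))\cong\HH(\Urest)$ for the other direction. The paper simply cites Loday for this last isomorphism and is terser about well-definedness, whereas you spell out the matrix-trace argument and the independence of choices explicitly; the underlying strategy is the same.
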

\noindent
 Let us give a proof for completeness.
\begin{proof}
 Assume $M$ is projective, then there exists another
projective module $N$ such that 
$M\oplus N = U^{\otimes n}$. Hence in $\Upmod$ there are morphisms
 $$i:M\to U^{\otimes n}\quad\text{and}\quad p:U^{\otimes n}\to M$$
 with $p \circ i=\Id_M$ and $i\circ p$ an idempotent.
 They can be used to define a map $\HH(\Upmod)\to\HH(U)$
 as follows: Given $f\in\End(M)$, then
 $$[f]=[fpi]=[ifp] \in \HH(U^{\otimes n})\simeq\HH(U) .$$
 The last equivalence in proven in \cite{Loday}.
The inverse of this map sends $x\in \HH(U)$ to the right multiplication $r_x$ in $\End_U(\Urest)$. Hence, we have an isomorphism.
\end{proof}

Let us recall that $U$ as a free left $U$-module decomposes
into a direct sum of indecomposable projectives as follows
$$\Urest \cong \bigoplus_{j=1}^{\rt} j \PP_j^+ \oplus 
\bigoplus_{j=1}^{\rt} j \PP_j^-.
$$
The module $ \mathbb{P}=\oplus_{j,\pm}\ \Proj_j^\pm$ is called  projective generator of $\Urest$ and 
  $B=\End(\mathbb{P})$ the {\it basic} algebra.
  By definition,
  $$\HH(\Upmod)=\HH(B).$$

\begin{remark}  By \cite[Th. 8.4.5]{DK},  any finite dimensional algebra $A$ and its basic algebra $B$
   are Morita equivalent.  
  The Morita equivalence between $A$ and $B$ also implies
$$ \HH(A\text{{\rm -mod}})\simeq\HH(B\text{\rm{-mod}})$$ but this group could be different from $\HH(A)$.  
\end{remark}
 
 Let us use the notation $\Id_j^\pm=\Id_{\PP^\pm_j}$ for simplicity.
\begin{lemma}\label{L:HH0}
A basis for $\HH(B)$ is represented by $[\Id_k^\pm]$, $1\leq k\leq \rt$, and $[x_j^+]=[x_{\rt-j}^-]$, $1\leq j< \rt$.
\end{lemma}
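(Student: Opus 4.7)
The plan is to use Proposition \ref{Meq} to reduce the question to computing $\HH(B)$, then exploit the block decomposition of $B$. The central idempotents $\ideme_0, \ideme_\rt, \ideme_j$ ($1\leq j<\rt$) from Proposition \ref{P:Prop444} split the projective generator as $\mathbb{P}=\mathbb{P}_0\oplus \mathbb{P}_\rt\oplus\bigoplus_{j=1}^{\rt-1}\mathbb{P}_j$, where $\mathbb{P}_0=\PP_\rt^-$, $\mathbb{P}_\rt=\PP_\rt^+$, and $\mathbb{P}_j=\PP_j^+\oplus\PP_{\rt-j}^-$ for $1\leq j<\rt$. Correspondingly $B=B_0\oplus B_\rt\oplus\bigoplus_{j=1}^{\rt-1} B_j$ as a product of indecomposable block algebras, and $\HH(B)=\bigoplus_k\HH(B_k)$, reducing the problem to an analysis of each block separately.

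The trivial blocks give $B_0,B_\rt\cong\C$, hence $\HH(B_0)=\C[\Id_\rt^-]$ and $\HH(B_\rt)=\C[\Id_\rt^+]$. For $1\leq j<\rt$, using the $\Hom$-space description recalled in Section \ref{SS:DualityPivStr}, the block $B_j$ is $8$-dimensional with a $2\times 2$ matrix form whose diagonal entries are the commutative rings $\End(\PP_j^+)=\C\Id_j^+\oplus\C x_j^+$ and $\End(\PP_{\rt-j}^-)=\C\Id_{\rt-j}^-\oplus\C x_{\rt-j}^-$, and whose off-diagonals are $\Hom(\PP_j^+,\PP_{\rt-j}^-)=\C a_j^+\oplus\C b_j^+$ and $\Hom(\PP_{\rt-j}^-,\PP_j^+)=\C a_{\rt-j}^-\oplus\C b_{\rt-j}^-$. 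For any off-diagonal $c$, the commutator $[\Id_j^+,c]=-c$ already lies in $[B_j,B_j]$, so the entire $4$-dimensional off-diagonal of $B_j$ is contained in $[B_j,B_j]$.

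The only remaining contributions to $[B_j,B_j]$ come from brackets $bc-cb$ with $c:\PP_j^+\to\PP_{\rt-j}^-$ and $b:\PP_{\rt-j}^-\to\PP_j^+$, giving $bc\in\End(\PP_j^+)$ and $cb\in\End(\PP_{\rt-j}^-)$. Because $\PP_j^+$ and $\PP_{\rt-j}^-$ are non-isomorphic indecomposables, such compositions cannot contain an identity component, so $bc=\lambda(b,c)\, x_j^+$ and $cb=\mu(b,c)\, x_{\rt-j}^-$. A direct computation evaluating these compositions on the dominant vectors $\junbp_0,\junym_0$ via the definitions of $a_j^\pm,b_j^\pm$ shows that the map $(b,c)\mapsto(\lambda,\mu)$ takes values in a single line of slope $1$, so the contribution to $[B_j,B_j]$ is exactly one dimension and yields the single relation $[x_j^+]=[x_{\rt-j}^-]$. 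Hence $\dim\HH(B_j)=3$ with basis $\{[\Id_j^+],[\Id_{\rt-j}^-],[x_j^+]=[x_{\rt-j}^-]\}$.

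Summing over blocks gives $\dim\HH(B)=2+3(\rt-1)=3\rt-1$, which matches the dimension of $\HH(\Urest)$ computed in Section \ref{S:ResQuGr}; together with linear independence (verified block by block), this forces the listed classes to form a basis. The principal obstacle is the concrete composition computation at the end: it is what pins down the \emph{exact} equality $[x_j^+]=[x_{\rt-j}^-]$ rather than a nonzero scalar multiple, and it depends on the explicit normalisation of $a_j^\pm,b_j^\pm$ on the basis of $\PP_j^\pm$ given in \cite{FGST1}.
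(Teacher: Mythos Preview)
Your proposal is correct and follows essentially the same route as the paper: kill the off-diagonal morphisms $a_j^\epsilon,b_j^\epsilon$ via commutators with the idempotents, obtain the relation $[x_j^+]=[x_{\rt-j}^-]$ from a cross-composition of off-diagonal maps, and conclude by matching the count against $\dim\HH(\Urest)=3\rt-1$. The paper works globally rather than block by block and records only the single explicit identity $b_{\rt-j}^{-\epsilon}a_j^\epsilon=x_j^\epsilon$, $a_j^\epsilon b_{\rt-j}^{-\epsilon}=x_{\rt-j}^{-\epsilon}$; your stronger claim that \emph{every} pair $(b,c)$ lands on the slope-$1$ line is not needed (and not checked in the paper), since one such relation together with the cardinality argument already forces the listed classes to be a basis.
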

\begin{proof}

Recall that 
$$B=\bigoplus_{j,\epsilon,j',\epsilon'} \Hom(\Proj_j^\epsilon,\Proj_{j'}^{\epsilon'})\ .$$
A linear basis for $B$ defined in  Section \ref{S:modules} 
 consist of
$$\Id_k^\pm, \quad x^\epsilon_j,\quad a_j^\epsilon: \Proj_j^\epsilon\rightarrow \Proj_{\rt-j}^{-\epsilon},\quad
\text{and}\quad b_j^\epsilon: \Proj_j^\epsilon\rightarrow \Proj_{\rt-j}^{-\epsilon}$$
where $1\leq k\leq \rt$, $1\leq j< \rt$ and $\epsilon\in \{-,+\}$.
For $1\leq j<\rt$,
we have $a_j^\epsilon \Id_j^\epsilon=a_j^\epsilon$,
while $\Id_j^\epsilon a_j^\epsilon=0$, so that $a_j^\epsilon$ and similarly $b_j^\epsilon$ vanish in $\HH(B)$. 
We also have $ b_{\rt-j}^{-\epsilon}a_j^\epsilon=x_j^\epsilon$, while 
 $ a_j^\epsilon b_{\rt-j}^{-\epsilon}=x_{\rt-j}^{-\epsilon}$. We get the relation
 $[x_j^\epsilon]=[x_{\rt-j}^{-\epsilon}]\in \HH(B)$.
 Since the resulting  set of generators has expected cardinality,
this completes the proof of the lemma.
 \end{proof}

Combining Proposition \ref{Meq} with Lemma \ref{L:HH0}, we conclude that
 $\HH(U)$ has  dimension $3p-1$, with basis consisting of
 \begin{itemize}
\item $\tclass^\pm_k$, for $1\leq k \leq \rt$, represented by the minimal (non central) idempotent
projecting  onto a copy of the  module $\PP_k^\pm $, and
\item $\tclass_j=\idemw_j^+\tclass_j^+=\idemw_j^-\tclass_{p-j}^-$, for $1\leq j \leq \rt-1$. 
 \end{itemize}
On the regular representation in $\Upmod$ these elements act by the right multiplication.

\section{Proofs of Theorems \ref{mTrace} and \ref{main}}

In this section we construct the modified trace on $\Upmod$  and  compute it for the regular
representation $\Urest$ and its tensor powers. 
This will provide the main tool
to prove Theorems \ref{mTrace} and \ref{main}.

\subsection*{Modified trace}
The subcategory $\Upmod$ is an ideal of $\Umod$ in the sense of \cite{GKP2, GPV},
 which means  the following:  
\begin{enumerate}[label=\alph*)]
 \item If $V\in \Upmod$ and $W\in \Umod$, then $V\otimes W \in \Upmod$ and $V\otimes W \in \Upmod$.
\item If $V\in \Upmod$, $W\in \Umod$, and there exists morphisms  $f:W\to V$,  $g:V\to W$ such that $g  f=\Id_W$, then $W\in \Upmod$.
\end{enumerate}
Let us recall that a {\em modified trace} on $\Upmod$  is  a family of linear functions
$$\{\t_V:\End_\cat(V)\rightarrow \C \}_{V\in \Upmod}$$
such that the following two conditions hold:
\begin{enumerate}
\item {\bf Cyclicity.}  If $X,V\in \Upmod$, then for any morphisms $f:V\rightarrow X $ and $g:X\rightarrow V$  in $\Urest$-mod we have 
\begin{equation*}\label{E:Tracefggf}
\t_V(g f)=\t_X(f  g).
\end{equation*} 
\item {\bf Partial trace properties.} If $X\in \Upmod$ and $W\in \Umod$ then for any $f\in \End_\Urest(X\otimes W)$ and $g\in \End_\Urest(W\otimes X)$ we have
\begin{align*}%
\t_{X\otimes W}\left(f \right)
&=\t_X \left( \tr_r^W(f)\right),  \\
 \t_{W\otimes X}\left(g \right)
&=\t_X \left( \tr_l^W(g)\right)  
\end{align*}
where $\tr_r^W$ and $\tr_l^W$ are the right and left partial categorical traces along $W$ defined by \eqref{E:PartialLRtrace}.  
\end{enumerate}
If only the first (resp. the second) of the two partial trace properties is satisfied, we call the modified trace
right (resp. left).
\subsection*{Proof of Theorem \ref{mTrace}}
Corollary 3.2.1 of \cite{GKP2} implies the existence of a unique (up to global scalar) right modified trace 
in any unimodular pivotal category with enough projectives and a simple projective object $L$, such that $\tev_L$  is surjective.
The category $\Upmod$ does satisfy all these assumptions.
Hence there exists a unique right  modified trace
$$\{\rTr_V:\End(V)\rightarrow \C \}_{V\in \Upmod}
$$
  normalized by 
$$\rTr_{\PP^+_\rt}(\Id_{\PP^+_\rt})=(-1)^{\rt-1}.$$
Analogous arguments imply the existence of the unique
left trace
$$ \{\lTr_V:\End(V)\rightarrow \C \}_{V\in \Upmod}$$
  normalized by 
$$\lTr_{\PP^+_\rt}(\Id_{\PP^+_\rt})=(-1)^{\rt-1}.$$


We will compute both of them and show that they coincide.
For this, we will use  additivity of trace functions for direct sums,
which follows  from the cyclicity.
Hence, it is enough to compute modified traces on the 
endomorphisms of the indecomposable projectives.

\subsubsection*{Identity endomorphisms}
Recall $\repX_1^+$ is the one dimensional trivial $\Urest$-module whose action on any vector $v$ is given by $Ev=Fv=0$ and $K^{\pm 1}v=v$.  There is another one dimensional module $\repX_1^-$ whose action on any vector $v$ is given by $Ev=Fv=0$ and $K^{\pm 1}v=-v$.  
Hence, $\tr_r(\Id_{\repX_1^-})=\text{trace}^{\repX_1^-}(K^{p+1})=
(-1)^{p+1}$. 
  Using $\PP^-_j\cong \PP^+_j\otimes\repX_1^-$, we can compute the  trace on $\PP^-_\rt$:
 $$\rTr_{\PP^-_\rt}(\Id_{\PP^-_\rt})=\rTr_{\PP^+_\rt\otimes \repX_1^-}(\Id_{\PP^+_\rt\otimes \repX_1^-})=\rTr_{\PP^+_\rt} \left( \tr_r^{\repX_1^-}(\Id_{\PP^+_\rt\otimes \repX_1^-})\right)=1.
 $$
 Similarly, $\PP^-_\rt\cong  \repX_1^- \otimes \PP^+_p $ implies that $\lTr_{\PP^-_\rt}(\Id_{\PP^-_\rt})=1$.  \\

Next we compute some partial traces involving $\repX^+_2$.  
The module $ \repX^+_2$ has a basis $\{w_0,w_1\}$ with action
\begin{align*}
Ew_0 &=0 & Fw_0&=w_1 & Kw_0&=qw_0\\
Ew_0 &=w_1 & Fw_1&=w_0 & Kw_1&=q^{-1}w_1.
\end{align*} 
Recall that  
an endomorphism of $\PP^+_j$ is determined by the image of the dominant vector $\junbp_0$.  We can use this fact to compute the following partial trace:
\begin{multline*}\left(\tr_l^{\repX^+_2}(\Id_{\repX^+_2\otimes \PP^+_j})\right)(\junbp_0)=\\
=w_0^*( K^{\rt-1}w_0) \junbp_0 + w_1^*( K^{\rt-1}w_1) \junbp_0 =(q^{\rt-1}+q^{-\rt+1})\junbp_0\ .
\end{multline*}
Thus, $\tr_l^{\repX^+_2}(\Id_{\repX^+_2\otimes \PP^+_j})=-(q+q^{-1})\Id_{ \PP^+_j}$ for $j=1,...,\rt-1$.
Similarly, for the right partial trace of the identity of $ \PP^+_j\otimes\repX^+_2$ 
we get
$$
\tr_r^{\repX^+_2}(\Id_{ \PP^+_j\otimes\repX^+_2})=(q^{-\rt+1}+q^{\rt-1})\Id_{ \PP^+_j}=(-q-q^{-1})\Id_{ \PP^+_j}.
$$

The decomposition of tensor products of a simple module with a projective indecomposable module is given in \cite[proposition 4.1]{Suter} (also see \cite[Theorems 3.1.5, 3.2.1]{KondoSaito}).
In particular, 
\begin{equation}\label{E:IsoPPp} 
\PP_\rt^\pm\otimes \repX_2^+\cong \PP^\pm_{p-1}\cong \repX_2^+\otimes \PP_\rt^\pm\ ,
\end{equation}
\begin{equation}\label{E:IsoPP1} 
\PP_{\rt-1}^\pm\otimes \repX_2^+\cong \PP_{\rt-2}^\pm\oplus 2\PP^\pm_\rt\cong \repX_2^+\otimes \PP^\pm_{\rt-1}\ ,
\end{equation}
and 
\begin{equation}\label{E:IsoPPj} 
\PP^\pm_j\otimes \repX^+_{2}\cong \PP^\pm_{j-1}\oplus \PP^\pm_{j+1}\cong \repX^+_{2}\otimes \PP^\pm_j
\end{equation} 
for $j\in\{2,...,\rt-2\}$. 

Combining  these formulas with properties of the modified trace we have:
$$\rTr_{\PP^\pm_j\otimes \repX^+_{2}}\left(\Id_{ \PP^+_j\otimes\repX^+_2}\right)
=\rTr_{\PP^\pm_j}\left(\tr_r^{\repX_2^+}(\Id_{ \PP^+_j\otimes\repX^+_2})\right)=
(-q-q^{-1})\rTr_{\PP^\pm_j}\left(\Id_{ \PP^+_j}\right)$$
where $j\in \{2,...,\rt\}$.  
This equality  together with the isomorphism on the left hand side of  Equation \eqref{E:IsoPPp} for $j=\rt$ gives
$$ \rTr_{\PP^\pm_{\rt-1}}(\Id_{\PP^\pm_{\rt-1}})=(\mp 1)^{\rt-1}(-q-q^{-1}).$$
Then the isomorphism of the left hand side of \eqref{E:IsoPP1} implies
$$ \rTr_{\PP^\pm_{\rt-2}}(\Id_{\PP^\pm_{\rt-2}})=(\mp 1)^{\rt-1}((-q-q^{-1})^2-2)=(\mp 1)^{\rt-1}(q^2+q^{-2}).$$
Finally, for $j\in\{2,...,\rt-2\}$, the isomorphism of the left hand side of Equation \eqref{E:IsoPPj} implies
\begin{equation}\label{E:rTrPPpmchi}
-(q+q^{-1})\rTr_{\PP_j^\pm}(\Id_{\PP_j^\pm})=\rTr_{\PP_{j-1}^\pm}(\Id_{\PP_{j-1}^\pm})+\rTr_{\PP_{j+1}^\pm}(\Id_{\PP_{j+1}^\pm})\ .
\end{equation}
Recursively the last equality implies 
\begin{equation}
\rTr_{\PP_j^\pm}(\Id_{\PP_j^\pm})=(\pm 1)^{p-1}(-1)^{j}(q^j+q^{-j})
\end{equation}
for  $j\in\{2,...,\rt-1\}$.  

Using the right hand side of the tensor product in \eqref{E:IsoPPp},\eqref{E:IsoPP1},\eqref{E:IsoPPj} we compute similarly the
left trace of identities and get
$$\lTr_{\PP^\pm_j}(\Id_{\PP^\pm_j})=\rTr_{\PP^\pm_j}(\Id_{\PP^\pm_j}),\ \text{for $1\leq j\leq \rt$}.$$
\subsubsection*{Endomorphisms $x^\pm_j$}
For $x\in U$, let us denote by $l^V_x$ the operator of the left multiplication by $x$ on $V$. Sometimes, we will omit $V$ for simplicity. 

To compute $\rTr_{\PP_j^\pm}(x_j^\pm)$ we will use the action of the Casimir element $$C=FE+\frac{qK+q^{-1}K^{-1}}{(q-q^{-1})^2}.$$
Since $C$ is central, $l^\PP_C$ commutes with the left $U$-action, and hence defines an
  endomorphism in $\Upmod$.
 
On simple modules, $C$ acts by a scalar, hence 
\begin{equation}\label{E:traceActionCPp}
\rTr_{\PP_\rt^\pm}(l_C^{\Proj_\rt^\pm}) 
=\frac{2 (\mp 1)^{\rt} }{(q-q^{-1})^2}.
\end{equation} 

For $j\in\{1,\dots,\rt-1\}$,  the dominant vector $\junbp_0$ (resp. $\junym_0$) of $\Proj_j^+$ (resp. $\Proj_j^-$) has weight $\pm q^{j-1}$.   The action of $C$ on this vector is 
$$C\junbp_0=\junap_0+\frac{q^j+q^{-j}}{(q-q^{-1})^2}\junbp_0$$
(resp. $C\junym_0 = \junxm_0 -\frac{q^j+q^{-j}}{(q-q^{-1})^2}\junym_0$). Thus, for   $j\in\{1,\dots,\rt-1\}$ we have
\begin{equation}\label{E:ActionC}
l_C^{\Proj_j^\pm}=x^\pm_j\pm \frac{q^j+q^{-j}}{(q-q^{-1})^2}\Id_{\PP^\pm_{j}}\ .
\end{equation}

To compute the action of $C$ on tensor products we need
the following formula.
\begin{multline*}
\Delta(C)=K^{-1}\otimes FE+K^{-1}E\otimes FK\\+F\otimes E+FE\otimes K
 +\frac{qK\otimes K+q^{-1}K^{-1}\otimes K^{-1}}{(q-q^{-1})^2}\ .$$
\end{multline*}
The second and third terms of $\Delta(C)$ have no
diagonal contibution, and hence
vanish when computing the partial trace $\tr_r^{\repX^+_2}\left(l_{\Delta(C)}^{\Proj_i^\pm\otimes \repX_2^+}\right)$.
In particular, for $v\in\Proj_i^\pm$ we have
\begin{multline*}
\left[\tr_r^{\repX^+_2}\left(l_{\Delta(C)}^{\Proj_i^\pm\otimes \repX_2^+}\right)\right](v)\\
=-q^{-1}K^{-1}v  -(q^2+q^{-2})FEv -\frac{(q^2+q^{-2})q}{(q-q^{-1})^2}Kv + \frac{-2q^{-1}}{(q-q^{-1})^2}K^{-1}v.
\end{multline*}
When $i\in \{1,...,\rt-1\}$ and $v$ is the generating vector of $\Proj_i^\pm$ this equality implies 
$$
\tr_r^{\repX^+_2}\left(l_{\Delta(C)}^{\Proj_i^\pm\otimes \repX_2^+}\right)=-(q^2+q^{-2})x_i^\pm
\mp \frac{(q^2+q^{-2})}{(q-q^{-1})^2}(q^i+q^{-i})\Id_{\Proj_i^\pm}.
$$
Similarly, if $v$ is the highest weight of $  \Proj_\rt^\pm$, we obtain
\begin{equation}\label{E:prTrCp2}
\tr_r^{\repX^+_2}\left(l_{\Delta(C)}^{\Proj_{\rt}^\pm\otimes \repX_2^+}\right)=
\pm2 \frac{(q^2+q^{-2})}{(q-q^{-1})^2}\Id_{\Proj_\rt^\pm}.
\end{equation}

Now we can compute $\rTr_{\PP^\pm_{\rt-1}}(x_{\rt-1}^\pm)$.  From the isomorphism in \eqref{E:IsoPPp}, we have
$$
\rTr_{\PP^\pm_{\rt}\otimes \repX_2^+} \left(l_{\Delta(C)}^{\Proj_{\rt}^\pm\otimes \repX_2^+}\right)=\rTr_{\PP^\pm_{\rt-1}}\left(l_C^{\Proj_{\rt-1}^\pm}\right).
$$
Using Equations \eqref{E:ActionC} and \eqref{E:prTrCp2} we can simplify the last equality as follows
$$
\pm 2 \frac{q^2+q^{-2}}{(q-q^{-1})^2}\rTr_{\PP^\pm_{\rt}}(\Id_{\PP^\pm_{\rt}}^\pm)=\rTr_{\PP^\pm_{\rt-1}}(x_{\rt-1}^\pm)\pm \frac{q^{\rt-1}+q^{-\rt+1}}{(q-q^{-1})^2}\rTr_{\PP^\pm_{\rt-1}}(\Id_{\PP^\pm_{\rt-1}}^\pm)\ ,$$
or
$$\rTr_{\PP^\pm_{\rt-1}}(x_{\rt-1}^\pm)=\pm(\mp)^{\rt-1}  \frac{(q^{\rt-1}-q^{-\rt+1})^2}{(q-q^{-1})^2}.
 $$
Using the isomorphism in Equation \eqref{E:IsoPPj}, for $j\in\{2,\dots,\rt-2\}$ we 
 obtain the following recursive relation
\begin{equation}
\rTr_{\PP^\pm_{j-1}}(x_{j-1}^\pm)+\rTr_{\PP^\pm_{j+1}}(x_{j+1}^\pm)+(q^2+q^{-2})\rTr_{\PP^\pm_{j}}(x_{j}^\pm)=
-2(\pm 1)^{\rt}(-1)^j\ .
\end{equation}

Using Equation \eqref{E:IsoPP1} we can show that this formula also holds for $j=\rt-1$ by setting $x_{\rt}^\pm=0$.
We deduce the general formula for $j\in\{1,\dots,\rt-1\}$
 $$\rTr_{\PP^\pm_{j}}(x_{j}^\pm)=(\pm 1)^\rt (-1)^{j}[j]^2\ ,
$$ 
which is compatible with the computation at $j=\rt-2$ or $j=\rt-1$  and satisfies the recursive relation for $j\in\{2,\dots,\rt-2\}$.

With similar computation we get the same value for $\lTr_{\PP^\pm_{j}}(x_{j}^\pm) $.
 Thus, we have proved that the left and right modified traces are equal on $\Upmod$.
\hfill\mbox

Let us summarize our computations of the modified trace ($0<j<\rt$):
$$\begin{array}{|c|c|c|c|c|c|}
\hline
\Id_{\PP_\rt^-}& \Id_{\PP_\rt^+ }&\Id_{\PP_j^-}& \Id_{\PP_j^+}&x_j^-&x_j^+\\
\hline
1&(-1)^{\rt-1}&
(-1)^{\rt+j-1} (q^j+q^{-j})&(-1)^{j} (q^j+q^{-j})  &(-1)^{\rt+j} [j]^2&(-1)^j [j]^2\\ \hline
\end{array}$$.

\subsection*{Proof of Theorem \ref{main}}
Let us apply the modified trace construction to the regular representation $U\in \Upmod$.
Using the isomorphism of algebras
$$r:\Urest^{op}\cong\End_\Urest(\Urest), \;\; x\mapsto r_x$$
 where $r_x(y)=yx$ is the right multiplication,  
we
 define 
 $$\mTr:\Urest\to \C\quad\text{ by}\quad \mTr(x)=\t_\Urest(r_x).$$
By Theorem \ref{mTrace}, the linear map $\mTr$ is a character and satisfies the partial trace property.

The fact that the pairing between 
$$Z(\U) \times \HH(U) \to \C \quad\text{given by}\quad
(z,x)\mapsto \mTr(zx)$$ is non-degenerate
can now be shown by a direct computation.
Using Proposition \ref{P:Prop444} and Theorem \ref{mTrace}
we can explicitly compute
this pairing in the base of the center and the trace.
For example
\begin{equation*}
  \mTr\left(\idemw_{j}^+ \tclass^+_j\right)
 = 
  \rTr_{\PP^+_{j}}\left(  x_{j}^+\right)
   =(-1)^{j}[j]^2
\end{equation*}
or
 $$
   \mTr(\ideme_0\tclass^-_\rt)=\rTr_{\PP_\rt^-}( \Id_{\PP_\rt^-})
   =1.
$$
Completing the computation, we obtain
the pairing  shown in Table \ref{Table:Pairing}. 
\begin{table}[htp]
\caption{Values of the pairing on a basis, here $1\leq j<\rt$.}
\begin{center}
$\begin{array}{|c||c|c|c|c|c|}
\hline
&\tclass_\rt^+&\tclass_\rt^-&\tclass_s&\tclass_{s}^+&\tclass_{\rt-s}^-\\
\hline\hline
\ideme_\rt&(-1)^{\rt-1}&0&0&0&0
\\ \hline
\ideme_0&0&1&0&0&0
\\ \hline
\ideme_j&0&0&(-1)^j[j]^2&(-1)^j(q^j+q^{-j})&(-1)^j(q^j+q^{-j}) \\ \hline
\idemw_j^+&0&0&0&(-1)^j[j]^2&0 \\ \hline
\idemw_j^-&0&0&0&0&(-1)^j[j]^2\\ \hline
\end{array}$
\end{center}
\label{Table:Pairing}
\end{table}
From the table it is easy to see it is  non-degenerate.

\section{Proof of Theorem \ref{E:HenningsLog}}
In this section we will define our logarithmic 3-manifold
invariant $\Hlog(M,L)$,  prove Theorem \ref{E:HenningsLog} and 
compare $\Hlog(M,L)$ with the invariant defined by Jun Murakami
in \cite{jM2013}.
\subsection*{Logarithmic invariant}
Assume we are given a link $(L^+,L^-)$ with
$(m_+,m_-)$ components inside a $3$-manifold $M=S^3(L^0)$, where $L^0\subset S^3$ is 
a  surgery link for $M$ with $m_0$ components.
We suppose that $(L^+,L^-)$ is in $S^3\setminus L^0$, and choose a string link
$T=(T^+,T^0,T^-)$ whose closure is $(L^+,L^0,L^-)$.
By Lemma \ref{E:RTleftAction},
 the universal invariant $$J_T\in \left(\Urest^{\otimes (m_++m_0+m_-)}\right)^\Urest .$$
Let us color the components of $L^+$ and $L^-$  by central  elements $z_j\in Z(U)$, $1\leq j\leq m_+$ and by trace classes $h_k
\in \HH(U)$, $1\leq k\leq m_-$, respectively, and write
 $z^+=\otimes_j z_j$, $h^-=\otimes_k h_k$.
 Let us denote by $L$ the resulting colored link
 $((L^+,z^+), (L^-,h^-))$.
We obtain
\begin{equation*}
\Hlog(M, L):=\delta^s
\langle\left(   z^+\mu^{\otimes m_+}\otimes \mu^{\otimes m_0}\otimes \Id\right)\left(J_T\right),\,h^-\,
\rangle
\end{equation*}
by evaluating components of $J_T$ corresponding to
$(L^+,L^0)$ with
the right integral $\mu$ twisted with central elements and 
by applying to the result the modified trace pairing. 
Here $s$ is the signature of the linking matrix for $L^0$.
Theorem \ref{E:HenningsLog} claims that 
$\Hlog(M, L)$ is a topological invariant of the pair
$(M, L)$.

\begin{proof}[Proof of Theorem \ref{E:HenningsLog}]
We first show  that
$\Hlog(M, L)$ is an invariant of the colored link $L$, i.e.
it does not depend on the choice of $T$.
 By applying Lemma \ref{XXX} 
we see that $\Hlog(M, L)$ is invariant of the tangle $T_1$
obtained by closing the first $m_+ +m_0$ components of $T$.

Using the partial trace property  of the modified trace,
recursively, we
can safely close further $m_--1$ components of $T_1$
with quantum characters.
 Under this operation,
the colors $\tclass^\pm_j$ and $\tclass_j$  correspond to the quantum character
$\tr^{\PP^{\pm}_j}_l$  and this character pre-composed with $x^+_j$, respectively, for any $1\leq j< \rt$. The main point is that it does not matter which $m_- -1$ components we choose!

Hence
$\Hlog(M, L)$ is invariant of the $(1,1)$-tangle $K$ obtained from $T_1$ by closing all but one trace class colored components. 

But the resulting central element 
$J_K$ does not depend on the point where we cut $L$ into  $K$. The proof mimics the argument showing that long knots
and knots in $S^3$ are equivalent.

Indeed, let us think
about our $(1,1)$-tangle $K$ as being a long knot.
We need to show that our central element does not change if we move an arc through "infinity" (or the cutting disc). This move
can be alternatively realized by moving the arc in the opposite way through
the long knot, which is just  a sequence of Reidemeister moves, under which we know $J_K$ to be stable.


%
It remains to show  invariance under Kirby moves:
sliding along a component of $L^0$ and stabilisation with $\pm 1$ framed unknot. 
The defining property of the right integral ensure the sliding invariance (see e.g. \cite{Kerler_genealogy}).
Note that if we change the orientation on one of the
$T^0$ components, this will change $J_T$ by applying $S$ 
at the corresponding position, but now $\mu \circ S$ is a left integral, hence the sliding property holds after rearranging the components.

Adding to $L^0$ a $\pm 1$ framed unknot multiplies $\Hlog(M,L)$
 by $\mu(v^\mp)=\delta^{\mp}$,
and  changes the signature $s$ by $\pm 1$, so $\Hlog(M, L)$
remains the same.
\end{proof}


\subsection*{Relation with other invariants}
 
Here we show that  the logarithmic invariant
of Jun Murakami   \cite{jM2013}
is a special case of $\Hlog(M,L)$ where
$L^-$ has precisely one component.

Murakami's  invariant  is defined for a knot and a colored link in a $3$-manifold.
 We will adapt our notation  to his setting. 
Let us consider a link $(L^+,L^-)$ in a $3$-manifold $M=S^3(L_0)$
 as before, but with $L_-=K$  a knot
 and $m_-=1$. The link $L^+$ is colored by $z^+$ as before.
 The logarithmic knot invariant defined by Murakami is then
$$\Jun(M, (L^+,z^+), K)=\delta^s
(  z^+\mu^{\otimes m_+}\otimes \mu^{\otimes m_0}\otimes \Id)(J_T)
\in Z(U)\ .$$
 Due to different conventions in the definition of  $J_T$,  Murakami's original invariant rather corresponds to the opposite link
 in our notation.
 
Murakami further expands his invariant in the basis 
of the center as follows
$$\Jun(M,(L^+,z^+), K)= \sum^p_{j=0} a_j \ideme_j +
\sum^{p-1}_{j=1} b^+_j\idemw^+_j + \sum^{p-1}_{j=1} b^-_j\idemw^-_j.$$
The coefficients are clearly topological invariants of the 
triple $(M,(L^+,z^+),K)$.

\begin{proposition}
With the above notation, we have ($1\leq j<\rt$)
\begin{align*}
a_0&=\Hlog(M,(L^+,z^+),(L^-,\tclass_p^-))\\
 a_p&=(-1)^{\rt-1}\Hlog(M,(L^+,z^+),(L^-,\tclass_p^+))\\
 a_j&=\frac{(-1)^{j}}{[j]^2}\Hlog(M,(L^+,z^+),(L^-,\tclass_j))\\
 b_j^+&=\frac{(-1)^{j}}{[j]^4}\Hlog(M,(L^+,z^+),
(L^-,[j]^2\tclass_j^+-(q^j+q^{-j})\tclass_j))\\ 
b_j^-&=\frac{(-1)^{j}}{[j]^4}\Hlog(M,(L^+,z^+),
(L^-,[j]^2\tclass_{p-j}^--(q^j+q^{-j})\tclass_j))
\end{align*}
\end{proposition}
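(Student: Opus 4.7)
The plan is to reduce the proposition to a linear-algebra computation using Table \ref{Table:Pairing}. First, I would unpack the two definitions side by side. When $L^-$ is a single knot $K$, the element
$$
Z:=\bigl(z^+\mu^{\otimes m_+}\otimes \mu^{\otimes m_0}\otimes \Id\bigr)(J_T) \in (U)^U=Z(U)
$$
is central by Lemma \ref{E:RTleftAction}, and Murakami's invariant is exactly $\Jun(M,(L^+,z^+),K)=\delta^s Z$. On the other hand, for $m_-=1$ the modified trace pairing reduces to the pairing of Theorem \ref{main}, so
$$
\Hlog\bigl(M,(L^+,z^+),(K,h)\bigr)=\delta^s\langle Z,h\rangle = \mTr\bigl(\Jun(M,(L^+,z^+),K)\cdot h\bigr)
$$
for any trace class $h\in \HH(U)$. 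This is the crucial identification turning the proposition into a statement about the pairing $\mTr(zh)$ on $Z(U)\times \HH(U)$.

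Next, I would substitute the expansion $\Jun=\sum_{j=0}^{\rt}a_j\ideme_j+\sum_{j=1}^{\rt-1}(b_j^+\idemw_j^++b_j^-\idemw_j^-)$ into this formula and read off the values of $\mTr(\Jun\cdot h)$ from Table \ref{Table:Pairing}. For the three ``easy'' cases the table immediately gives
\begin{align*}
\mTr(\Jun\cdot \tclass_p^-)&=a_0,\qquad \mTr(\Jun\cdot \tclass_p^+)=(-1)^{\rt-1}a_p,\\
\mTr(\Jun\cdot \tclass_j)&=(-1)^j[j]^2\,a_j,
\end{align*}
which, after multiplying by the appropriate scalar, yields the first three formulas (using $((-1)^{\rt-1})^2=1$ and $(-1)^j/(-1)^j[j]^2=(-1)^j/[j]^2\cdot(-1)^j/(-1)^j$ $=(-1)^j/[j]^2$ after clearing signs).

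For the coefficients $b_j^\pm$ the same pairing is no longer diagonal: both $\ideme_j$ and $\idemw_j^+$ pair nontrivially with $\tclass_j^+$. The key observation is that the $3\times 3$ block of Table \ref{Table:Pairing} indexed by $\{\ideme_j,\idemw_j^+,\idemw_j^-\}$ and $\{\tclass_j,\tclass_j^+,\tclass_{\rt-j}^-\}$ is triangular with invertible diagonal, so it can be explicitly inverted. A short check shows that the combination $h=[j]^2\tclass_j^+-(q^j+q^{-j})\tclass_j$ is the dual trace class isolating $\idemw_j^+$: indeed $\mTr(\ideme_j\cdot h)=[j]^2(-1)^j(q^j+q^{-j})-(q^j+q^{-j})(-1)^j[j]^2=0$, $\mTr(\idemw_j^-\cdot h)=0$ by the table, and $\mTr(\idemw_j^+\cdot h)=(-1)^j[j]^4$. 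The analogous combination $[j]^2\tclass_{\rt-j}^--(q^j+q^{-j})\tclass_j$ isolates $\idemw_j^-$. Substituting and rearranging gives the last two formulas.

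The bulk of the work is purely linear-algebraic; the only conceptual step is the identification $\Hlog(M,(L^+,z^+),(K,h))=\mTr(\Jun\cdot h)$, which is where the choice of normalisation $\delta^s$ in both definitions has to match. That matching is the main bookkeeping obstacle; once confirmed, the rest is a matter of expanding in bases and inverting the $3\times 3$ blocks of Table \ref{Table:Pairing} for each $1\le j<\rt$.
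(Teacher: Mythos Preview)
Your proposal is correct and follows essentially the same route as the paper: identify $\Hlog(M,(L^+,z^+),(K,h))=\langle \Jun,h\rangle$ and then read off the coefficients from Table~\ref{Table:Pairing}, inverting the $3\times 3$ block for $b_j^\pm$. Two small cosmetic points: the centrality of $Z$ really uses Lemma~\ref{XXX} (partial evaluation by quantum characters stays in the invariants), not just Lemma~\ref{E:RTleftAction}; and the parenthetical remark about ``clearing signs'' is garbled and should simply read $a_j=\bigl((-1)^j[j]^2\bigr)^{-1}\Hlog=\frac{(-1)^j}{[j]^2}\Hlog$.
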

\begin{proof}
For any trace class $h\in \HH(\Urest)$ we have 
$$\Hlog((M,(L^+,z^+),(L^-,h))=\langle \Jun(M,(L^+,z^+),L^-),h\rangle \ .$$
From Table \ref{Table:Pairing} we get
\begin{eqnarray*}
\Hlog(M,(L^+,z^+),(L^-,\tclass_p^-)&=&a_0\\
\Hlog(M,(L^+,z^+),(L^-,\tclass_p^+)&=& (-1)^{\rt-1} a_p\\
\Hlog(M,(L^+,z^+),(L^-,\tclass_j)&=& {(-1)^{j}}{[j]^2}a_j\\
\Hlog(M,(L^+,z^+),
(L^-,\tclass_j^+)&=&  {(-1)^{j}}(q^j+q^{-j})a_j
+ {(-1)^{j}}{[j]^2}b^+_j\\
\Hlog(M,(L^+,z^+),
(L^-,\tclass_{p-j}^-)&=&  {(-1)^{j}}(q^j+q^{-j})a_j
+ {(-1)^{j}}{[j]^2}b^-_j\ .
\end{eqnarray*}
The claim follows.
\end{proof}

\end{document}